\documentclass[onefignum,onetabnum]{siamart220329}

\usepackage{amsmath,amssymb,mathtools}
\usepackage[dvipsnames]{xcolor}
\usepackage[utf8]{inputenc}
\usepackage{booktabs}
\usepackage{enumitem}
\usepackage[margin=1in]{geometry}
\usepackage[colorlinks=true,linkcolor=Maroon,citecolor=Maroon,urlcolor=Maroon]{hyperref}
\usepackage{subcaption}
\usepackage{algorithm}
\usepackage{algpseudocode}
\newcommand*{\horzbar}{\rule[.5ex]{2.5ex}{0.5pt}}

\newcommand{\mcD}{\mathcal D}
\newcommand{\bz}{{\boldsymbol z}}
\newcommand{\by}{{\boldsymbol  y}}
\newcommand{\bx}{{\boldsymbol x}}
\newcommand{\bu}{{\boldsymbol u}}
\newcommand{\be}{{\boldsymbol e}}
\newcommand{\query}{{\boldsymbol q}}          
\newcommand{\val}{{\boldsymbol v}}             
\newcommand{\assign}{{\xi}}            
\newcommand{\assignv}{{\boldsymbol \xi}}            
\newcommand{\querytrue}{\boldsymbol{q}^*}    
\newcommand{\valtrue}{\boldsymbol{v}^*}       
\newcommand{\assigntrue}{\xi^*}      
\newcommand{\assigntruev}{\boldsymbol{\xi}^*}      
\newcommand{\querymap}{\hat{\boldsymbol q}}
\newcommand{\valmap}{\hat{\boldsymbol v}}

\newcommand{\assignext}{\hat{\xi}}                         
\newcommand{\assignextv}{{\hat{\boldsymbol \xi}}}                      

\newcommand{\reals}{\mathbb R}
\newcommand{\bbP}{\mathbb P}
\newcommand{\E}{\mathbb E}

\newtheorem{thm}{Theorem}[section]

\newtheorem{lem}[thm]{Lemma}

\newtheorem{defn}[thm]{Definition}

\title[EM-based iterations]{EM-based iterations for multiple instance learning on a query-value model}

\author[E. Levien]{Ethan Levien}
\address{Department of Mathematics, Dartmouth College}
\email{ethan.levien@dartmouth.edu}

\date{}
\keywords{multiple instance learning, multiple instance regression, extremal pooling, alternating optimization, attention}
\AMS{68T07, 68Q32, 60F05}

\begin{document}

\begin{abstract}
In multiple instance regression (MIR)  data are organized into bags (collections of instances in feature space) and the goal is to learn a mapping that assigns labels to bags. A typical assumption is that there is a so-called concept point in feature space, the proximity to which dictates the bag label. Motivated by modern MIR architectures which are based on attention, we study a softmax model that decouples the concept point and the labeling scheme. The two are respectively determined by a query direction $\query$ and a value direction $\val$ in feature space. This problem isolates a basic challenge of learning both $\query$ and $\val$ from bag-level supervision. From this model we derive a parametric family of iterations in the noiseless limit, which generalizes a method known as the EM-DD algorithm. 
We then derive concentration results for the MLE estimators of the query and value vectors obtained from a random selection of instances. Our result for the value vector shows that a single random initialization of the value vector already points in the correct direction on average, so that a polynomial (in the number of instances per bag and the feature dimension) number of bags is enough for the EM algorithm to converge in $O(1)$ steps with high probability. A key aspect of this analysis is the interplay between concentration of empirical covariance matrices and extremal statistics arising from the selection rule.
\end{abstract}
\maketitle

\section{Introduction}

Multiple Instance Learning (MIL) was introduced for chemical identification problems in \cite{dietterich1997solving} and has since become a major paradigm in machine learning \cite{Carbonneau2018survey}. In contrast to standard supervised learning where individual feature vectors are paired with labels, MIL training data consists of sets of feature vectors, called bags, together with a single bag-level label for each set; see Figure \ref{fig:1}. We represent each bag as a matrix $X^k$ with rows $\bx_i^k$ (the \emph{instances}) and bag labels $y_k \in \reals$ (or $\{0,1\}$). The goal in MIL is then to learn a pooling function $h:\cup_{n'} \reals^{n' \times d} \to \reals$ from the data $(X^k,y_k)$ which is assumed to be invariant to row permutations of $X^k$ \cite{dooly2002multiple,doran2016multiple,uriot2019learning,Uriot2019KernelEmbeddingMIR,guptalearning}.

The MIL framework applies across a wide range of applications. In drug design, one measures the binding affinity of a molecule to a target, which is often determined by a single high-binding conformation among the many conformations a molecule can adopt. Each conformation may be embedded as a feature vector and treated as an instance, with the molecule as the bag \cite{dietterich1997solving,jain1994compass,sliwoski2014computational,RayPage2001MIR}. In immune repertoire classification instances are embeddings of antibody sequences within an individual's repertoire and the bag label measures immune response. Typically immunity is determined by only a small fraction of disease-associated sequences \cite{widrich2020modern}. In image analysis, images serve as bags and patches or subregions as instances \cite{maron1998learning,zhou2006miml}.

 \begin{figure}[t!]
\centering
\includegraphics[width=0.9\textwidth]{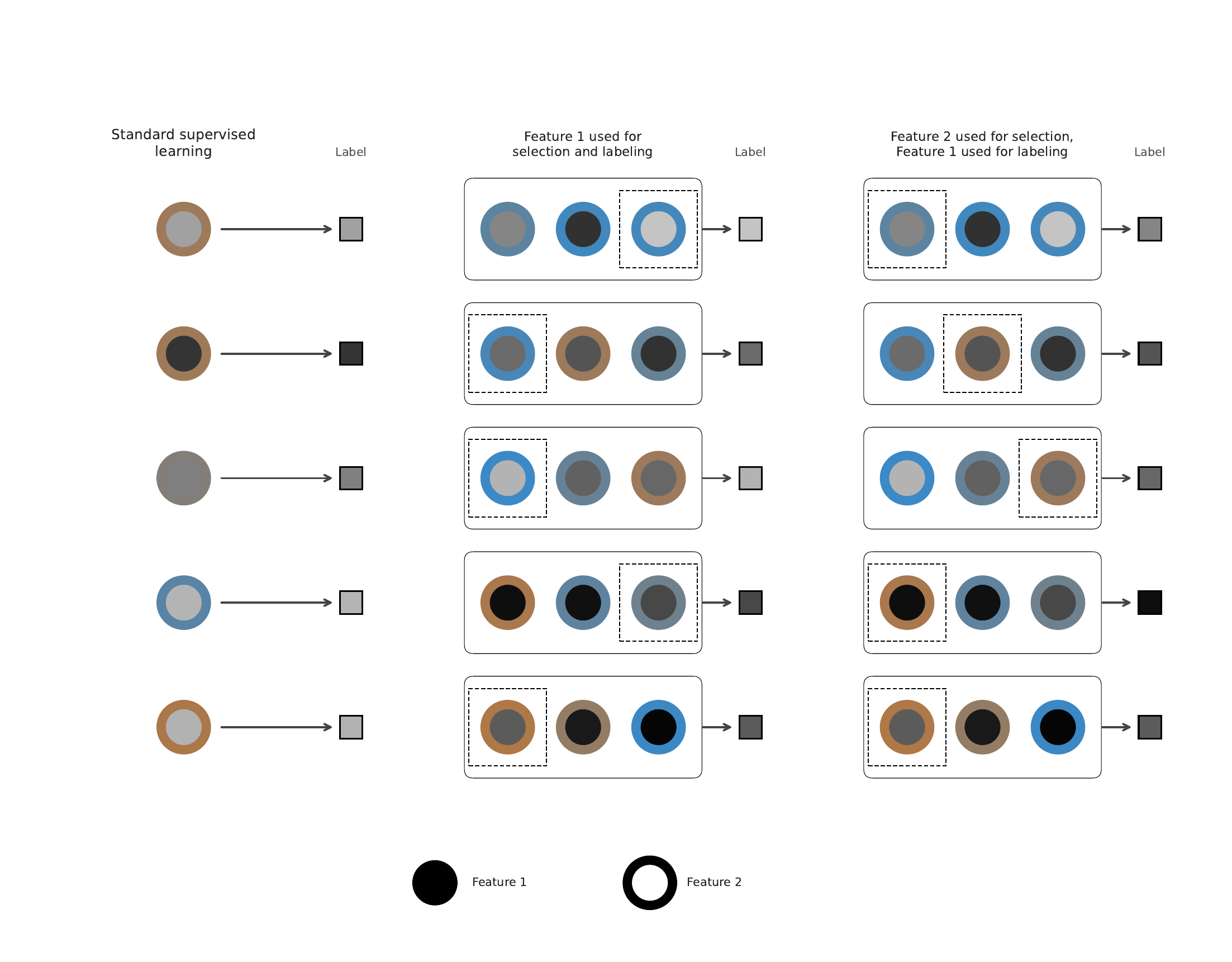}
\caption{Comparison of standard supervised learning and MIL.  Each row represents a bag of instances (circles). Instances live in feature space, which here is $\reals^2$ with coordinates $(x_1,x_2)$ represented by a hue (edge) and lightness (filling). In standard supervised learning each bag contains a single instance. In MIL, we select an active instance via some selection rule, which takes the lightest circle (middle) or the most orange circle (right).  The middle represents an MIL dataset where the feature used for selecting the active instance  is also used to label that instance.  In contrast, on the right figure the hue is used to select the active instance, but the label comes from the lightness feature. In our model, the directions in feature space used for selection and labeling will be denoted by $\query$ and $\val$ as a nod to query and value matrices in attention.   }\label{fig:1}
\end{figure}

The subject of this paper is the asymptotics of the MIL problem when the bag labels are determined by an \emph{active instance} which is selected via some extremal rule. Specifically, we study the pooling function 
\begin{equation}\label{eq:hardmax}
    h_{\infty}(X; \query, \val) = \bx_{\,\operatorname{argmax}_{1 \le j \le n}\, \bx_j \cdot \query} \cdot \val.
\end{equation}
which is parameterized by a \emph{query} $\query \in S^{d-1}$ and \emph{value} vector $\val \in \reals^d$. Following \cite{RayPage2001MIR},  we consider the regression setting, henceforth called Multiple Instance Regression (MIR), in which $\query$ and $\val$ are to be learned from labeled bags $\{(X^k, h(X^k; \query, \val))\}_{k \in [m]}$.   The notion that there is an extremal principle for determining which instance is used to label a bag dates to early work on MIL for drug design \cite{dietterich1997solving,maron1998learning}.
In particular, many models were based on the idea of a \emph{concept point} $\bx^*$ such that the bag label was determined by proximity to $\bx^*$, e.g., $h(X^k) = V(\min_i\, d(\bx_i, \bx^*))$ for some distance $d$ and potential $V$. Eq. \eqref{eq:hardmax} falls within a more general class of models where the features used to select the active instance (determined by $\query$) may differ from those used to assign the label to that instance (determined by $\val$); see Figure~\ref{fig:1} (middle and right panels).

Eq. \eqref{eq:hardmax} can be viewed as a simple case of an  Attention layer, where the pooling function is a softmax \cite{ilse2018attention}. Such models have become common in MIL and are sometimes called Hopfield pooling layers  \cite{widrich2020modern,ramsauer2021hopfield}. 
In attention based models instances are aggregated in a bag level embedding $\bz_k$ obtained as a weighted sum of instances: 
\begin{equation}
\bz_k = \sum_{j=1}^na_{k,j}\bx^k_j,\quad a_{k,j} = \frac{e^{\lambda s(\bx^k_j)}}{\sum_{j'}e^{\lambda s(\bx^k_{j'})}}
\end{equation}
for some scoring function $s(\bx^k_j)$. The scaling factor, or inverse temperature, $\lambda \in \reals$ is a means of controlling the selection strength on the active instance. The aggregated point in feature space $\bz_k$ is then used to determine the bag label. 
Similar to the active instances $\xi_k$, $\bz_k$ acts as a form of data compression in the following sense. Conditioning on $\bz_k$ removes the dependence on the bag, while conditioning on $\xi_k$ removes the dependence on all the instances except $\bx^k_{\xi_k}$. These two forms of data compression are related if we treat $a_{k}$ as the bag-dependent prior distribution of $\xi_k$: 
\begin{equation}
a_{k,j} = \bbP(\xi_k = j|\bx^k_1,\dots,\bx^k_n)  \implies \bz_k = \E_{\xi_k \sim a_k}[\bx_{\xi_k}^k]. 
\end{equation} 
This latent variable interpretation has also been explored in the context of self-attention and transformer models, and allows Expectation-Maximization (EM) approaches to be applied to attention layers \cite{ilse2018attention}, which are typically trained with gradient descent  \cite{luo2020weakly,dooly2002multiple}.   The pooling function \eqref{eq:hardmax} can be viewed as the large $\lambda$ limit of an attention model with linear scoring, $s(\bx) = \query \cdot \bx$ followed by a linear regression on the aggregated statistic $\bz$. 
That is, Eq. \eqref{eq:hardmax} is the limit of the pooling function
\begin{equation}\label{eq:softmax}
  h_{\lambda}(X;\query,\val) \coloneqq \sum_{j=1}^n \frac{e^{\lambda\, \bx_j\cdot\query}}{\sum_{j'}e^{\lambda\,\bx_{j'}\cdot\query}}\,(\bx_j\cdot\val).
\end{equation}

Prior to the introduction of attention models in  MIL, the  EM algorithm played a major role. Specifically, a variant known as EM-DD  \cite{zhang2001emdd} combined EM with the Diverse Density (DD) approach of Maron and Lozano-P{\'e}rez \cite{maron1998framework}.
The EM algorithm is an alternating optimization procedure for Maximum-Likelihood Estimation (MLE) which is based on the idea that once a latent variable is conditioned on, the MLE becomes tractable \cite{amari2016information,hino2024geometry}. Conditioning on the indices $\xi_1,\dots,\xi_m$ of  active instances in each bag turns the problem into a standard supervised learning problem, and they are therefore  a natural latent variable.    Beyond diverse density, classical MIL methods also include SVM and kernel-based approaches \cite{Gartner2002MIKernels,Andrews2002MISVM}.
Despite these developments, Attention based models are typically trained with gradient descent, with $\lambda$ being a learned parameter \cite{widrich2020modern}. While EM has been applied to attention layers in other contexts \cite{li2019expectation}, there does not appear to be any rigorous understanding of how it performs on attention layers for simple models, such as our query-value model.

We treat Eq. \eqref{eq:hardmax} as a limit of a probabilistic model, which is equivalent to the attention model given by Eq. \eqref{eq:softmax}. In particular, we take noise in both selection (which instance is active) and labeling (how $y$ is determined from the active instances) to vanish together.  That is,  $p(\xi|X)$ and $p(y|X,\xi)$, which respectively control the noise in the selection of the active instance and the labeling of the bag given this instance, are approaching delta functions. This leads to a general family of assignment iterations arising from the EM algorithm. These iterations treat the active instance $\assigntrue_k = \operatorname{argmax}_{1 \le j \le n}\, \bx_j^k \cdot \querytrue$ as a latent variable. In particular, at each step of the iteration the value vector is estimated from a linear regression problem where the rows of the design matrix are $\bx^k_{\assign_k}$, with $\assign$ being a current estimate of the assignment. If only a fraction $f$ of the bags have the correctly assigned instance -- that is $f = m^{-1}\sum_k 1\{\assign_k = \assigntrue_k\}$ -- then a fraction $1-f$ of the rows will not have been used to generate the labels, while the other $f$ will. Moreover, those $f$ rows will be biased towards the query direction. Understanding the resulting linear regression problem therefore involves the interplay of extremal statistics and concentration for the relevant covariance matrices. We will analyze this regression problem after deriving the EM algorithm for the query-value model and presenting some numerical results.

 \subsection{Organization and contributions of this paper}
The paper is organized as follows. 

  \begin{itemize}
  \item In Section~\ref{sec:em} we derive a family of iterations, called ${\rm EM}_{\kappa}$ iterations, as a limit of the EM algorithm for a soft query--value model. The iterations are parameterized by $\kappa$, which controls the relative roles of the query and value vectors in determining the next assignment.  We show that
  the EM-DD method, when linearized, is a variant of the $\kappa=1$ member of this family (which we refer to as the $\widetilde{\rm EM}_1$ iteration). This therefore clarifies the relationship between previous EM based approaches applied to concept point models, and the general EM algorithm for the query-value model. 
  \item Section~\ref{sec:numerics} presents numerical examples on synthetic data with Gaussian instances. We find that the linearized EM-DD iteration ($\widetilde{\rm EM}_1$) converges in a few steps to the correct assignment when the query and value directions are
  aligned, but fails to do so when they are misaligned. This is to be expected since the algorithm is misspecified in this case, but interestingly, it converges to a partially correct assignment faster and more reliably than the correctly specified EM iteration. 
Motivated by this, we find that a staged method converges faster than the constant-$\kappa$ method. In this method, a $\kappa$ schedule alternating between $0$ and $1$ is run for a fixed number of steps, followed by a fixed schedule.
  \item Section~\ref{sec:manybag} contains theoretical results for Gaussian instances. Specifically, we state concentration theorems for the value and query vectors obtained from a random assignment, conditionally on a given fraction of correctly matched instances. Using our concentration result for the value vector we show that a \emph{single} uniformly random initial assignment already points, on average, in the correct direction for $\widetilde{\rm EM}_1$; consequently, once $m \gtrsim dn^2(\ln n)^6$ bags are available, this one assignment is likely to yield the true value vector in a single step. 
  \end{itemize}
  
  \subsection{Related work}

\subsubsection{CEM and clustering}
The EM-DD algorithm is closely related to Classification EM (CEM). Here, the likelihood is averaged over the data posterior, with a ``hardened'' assignment step \cite{celeux1992classification}. CEM was proposed for clustering because it allows many clustering algorithms to be viewed as variants of the EM algorithm \cite{zhang2009clustering}.
However, while clustering and MIL are both solved with EM, they differ in how the data compression is used.
In the regression setting, once $\vartheta$ is estimated, the assignment is determined by the observed pairs $(y_k,X^k)$. In other words, we do not need to integrate information across bags at the E step. In clustering the latent variable is the cluster assignment and to obtain the assignment in the clustering case, we need to combine information across points, but once this is done the parameter estimates are typically determined within each cluster, or are trivial to compute given the cluster assignment (for the prior cluster probabilities, for example \cite{dempster1977maximum,mclachlan2008algorithm}).

This contrast also has a direct bearing on initialization. In the clustering setting, a random partition of the data produces cluster means that all converge to the global population mean as $m \to \infty$, so a random initialization carries no information about the true cluster structure; signal only emerges once the E-step has incorporated structure from the data. In the latent regression setting, we can in some cases learn from a completely random assignment (see \cite{guptalearning}).

\subsubsection{Other extremal regression problems}
There are also some related models worth noting. Another problem involving max non-linearities is the max-linear factor model, where the goal is to learn $\val$ from $y_k = \max_i\{X_{k,i}\beta_i\}$ \cite{tran2022tropical}. In that case the problem can be viewed as linear regression on the tropical semiring and solved with a tropical matrix factorization. This contrasts with Eq.~\eqref{eq:hardmax}, where the matrix--vector product $X^k\val$ occurs in Euclidean space and the max is taken afterward, making algebraic approaches more challenging to develop.

\subsubsection{MIR with Gaussian instances}
Prior works \cite{Chauhan2024GenLearnMIR,guptalearning} have studied the asymptotics of a regression problem with Gaussian instances when the labeling mechanism is linear. Some of their results are similar in spirit to our Theorem \ref{thm:conc}. In particular Lemma 3.1 in \cite{guptalearning} provides bounds on the probability of retrieving the correct regression coefficient from randomly selected bags, with each instance being assigned the same bag label. We discuss the connection in detail in Section \ref{sec:manybag}. 

\subsubsection{Statistical guarantees for the EM algorithm}

There is also a substantial literature on statistical guarantees for EM-type algorithms in latent-variable models, including finite-sample and population-level analyses of EM \cite{balakrishnan2017statistical,xu1996convergence}. Our setting differs in that the noiseless extremal limit leads to hardened assignment iterations whose behavior is governed by empirical covariance concentration and Gaussian extreme-value statistics.

 \section{EM-based assignment iterations}\label{sec:em}
 
  \subsection{Notation}
  We use the notation $[n]  = \{1,\dots,n\}$ for any $n \in {\mathbb N}$. $\|\cdot\|$ will denote the $2$-norm for vectors and the spectral norm for matrices. We use $\theta_{\bx,\by}$ to denote the angle between vectors $\bx$ and $\by$. 
  As described in the introduction, our predictors are organized in $m$ bags each containing $n$ instances. Bags are represented as $n \times d$ design matrices $X^k$ whose rows are $d$-vectors $\bx^k_j$.  A vector $\assignv = (\assign_1,\dots,\assign_m) \in [n]^m$ is called a \emph{bag assignment} and assigns each bag an instance.  For an assignment $\assignv$ and data $\mcD = \{(X^k, h(X^k; \query, \val))\}_{k \in [m]}$ we define a matrix $X(\assignv) \in \reals^{m \times d}$ whose rows are the assigned instances from each bag; that is,
\begin{equation}
X(\assignv) = \begin{bmatrix}
\horzbar&  \bx_{\assign_1}^1 &\horzbar   \\
&   \vdots  & \\
\horzbar& \bx_{\assign_m}^m   &\horzbar
\end{bmatrix} \in \reals^{m \times d}.
\end{equation}
For two assignments $\assignv$ and $\assigntruev$, we denote the Hamming distance by $d_H(\assignv,\assigntruev) \coloneqq \#\{k  \in [m]:\assign_k \ne \assigntrue_k\}$. For two assignments $\assignv, \assignv' \in [n]^m$, we denote the cross covariance matrix of assigned instances by 
\begin{equation}
\hat{\Sigma}(\assignv,\assignv') = \frac{1}{m}X(\assignv)^TX(\assignv'). 
\end{equation}

\subsection{The EM algorithm for a general latent variable regression model}

Here we provide a brief introduction to the EM algorithm as it applies to a generic regression problem with a latent variable $\assign$ and parameters $\vartheta$. For the MIL problem, $\assign$ will be the assignment defined above and $\vartheta$ will be the value and query vectors; however, the formulation is able to accommodate a generic regression problem with a latent variable. 

The starting point is the likelihood $p(y|X,\vartheta)$ of the response variable given the predictors and parameters $\vartheta$.
\begin{equation}
L(\vartheta) =  \frac{1}{m} \sum_{k=1}^mL_k(\vartheta) = \frac{1}{m} \sum_{k=1}^m\ln p(y_k|X^k,\vartheta)
\end{equation}
where the likelihood of a single datum is obtained by marginalizing over $\assign$: $p(y|X,\vartheta) = \sum_{\assign} p(y,\assign|X,\vartheta)$.
We assume the model is well specified, meaning the data $(y_k,X^k)$ is sampled from $p(y_k|X^k,\vartheta^*)\pi(X^k)$ for the true parameters $\vartheta^*$ and predictor prior $\pi(X^k)$. The idea of the EM algorithm is that the likelihood $p(y|X,\assign,\vartheta)$ obtained by conditioning on the latent variable $\assign$ renders the likelihood more tractable (typically convex).

By Jensen's inequality, $L(\vartheta)$ is bounded below by
\begin{equation}\label{eq:Lthetardef}
L(\vartheta,r) = \frac{1}{m} \sum_{k=1}^m\E_{\assign \sim r_k}\left[\ln  \frac{p(y,\assign|X,\vartheta)}{r_k(\assign)}\right]
\end{equation}
where $r= m\otimes_k^m r_k$. $L(\vartheta,r)$ is referred to as the \emph{variational lower bound} (VLB) and satisfies
\begin{align}
L(\vartheta) - L(\vartheta,r) &= \frac{1}{m} \sum_{k=1}^m\E_{\assign \sim r_k}\left[\ln \frac{r_k(\assign)}{p(\assign|y_k,X^k,\vartheta)}   \right] \\
&=\frac{1}{m} \sum_{k=1}^mD_{\rm KL}(r_k,p(\cdot|y_k,X^k,\vartheta)) \ge0 \label{eq:KLbound}
\end{align}
Finding the distribution $r$ which maximizes the VLB is therefore equivalent to minimizing the expected KL divergence between the latent variable posterior and the distribution $r_k$, yielding  $\hat{p}_k(\assign) = p(\assign|y_k,X^k,\hat{\vartheta})$.

The EM algorithm is the alternating iteration on the VLB
\begin{align}\label{eq:EM}
r^{(t+1)} &= \operatorname{argmax}_{r}L(\vartheta^{(t)},r)\\
\vartheta^{(t+1)} &= \operatorname{argmax}_{\vartheta}L(\vartheta,r^{(t+1)})
\end{align}
where $\vartheta^{(0)}$ is typically sampled uniformly, but may also be drawn from some prior distribution.
Computing $L(\vartheta,r^{(t+1)})$ is referred to as the expectation step, while the optimization to find $\vartheta$ is the maximization step.  Usually this is written in the equivalent form
\begin{equation}\label{eq:EMQ}
\vartheta^{(t+1)} = \operatorname{argmax}_{\vartheta} \,\,Q(\vartheta;\vartheta^{(t)})
\end{equation}
where $Q$ is the so-called $Q$-function given by
\begin{equation}\label{eq:EMQdef}
Q(\vartheta;\vartheta^{(t)}) \coloneqq \frac{1}{m} \sum_{k=1}^m \sum_{\assign_k}p(\assign_k|y_k,X^k,\vartheta^{(t)}) \ln p(y_k,\assign_k|X^k,\vartheta)
\end{equation}
It can be checked that Eq.~\eqref{eq:EMQdef} is equivalent to \eqref{eq:EM}.
The basic theory of EM presented in \cite{mclachlan2008algorithm} establishes that the likelihood is non-decreasing along the sequence $\{\vartheta^{(t)}\}_{t \ge0}$.

In the case of MIL, a common assumption is that the assignment is generated by an extremal principle, thus $p(\assign_k = j|\vartheta)$ is a $\delta$ function. Computing the extremal assignment at each step gives rise to 
\begin{align}\label{eq:CEM}
\assign_k^{(t+1)} &= \operatorname{argmax}_{\assign}L_k(\vartheta^{(t)},\assign_k),\quad k = 1,\dots,m\\
\vartheta^{(t+1)} &= \operatorname{argmax}_{\vartheta}L(\vartheta,\assignv^{(t+1)})
\end{align}
where $L(\vartheta,\assignv)$ is shorthand for $L(\vartheta,\delta_{\assignv})$ with $L$ defined by Eq. \ref{eq:Lthetardef} (and similarly for $L(\vartheta,\delta_{\assignv})$). 
This ``hard'' form of the EM is referred to as the CEM in the context of clustering  \cite{celeux1992classification} and is related to EM-DD \cite{zhang2001emdd,zhou2006miml} algorithm for MIL, as we explain below.  If Eq. \eqref{eq:CEM} is to be understood as the limit of the EM when the posterior $\hat{p}_k(\assign)$ approaches a delta function, we should also be taking \emph{both} the likelihood $p(y_k|\assign_k,X^k,\vartheta)$ and the assignment prior $p(\assign_k|X^k,\vartheta)$ to approach delta function simultaneously.  Depending on the relative rates at which these approach delta functions, we will arrive at different updates for the assignment $\assign_k$. However, the data generating process will not depend on this choice.

 \subsection{Query-value model and $Q$-function}
 We now introduce a probabilistic model for which the extremal regression problem described in the introduction is a limit. The learnable parameters are the query--value pair $\vartheta = (\val,\query) \in \reals^d \times S^{d-1}$ and the latent variable is the assignment $\assignv$. For each $k$, we introduce a distribution on $[n]$ depending on the instances: 
 \begin{equation}
 w_{k,j}(\query) = e^{\query \cdot \bx^k_{j} - \psi_k(\query)},\quad \psi_k(\query) =  \ln \sum_{j'}e^{\query \cdot \bx^k_{j'}}\\
 \end{equation}
 This can also be seen as an exponential family with natural parameter $\query$ and uniform prior over the bag instances  \cite{amari2016information}. 
 The function $\psi_k$ is known as the cumulant generating function.  
Our data generating model may be written compactly as 
\begin{subequations}\label{eq:em_model}
  \begin{align}
\bbP\left( \assign_k=j|X^k,\query\right) &=  w_{k,j}(\lambda\query) \label{eq:em_model_a}\\
y_k &= \val \cdot \bx^k_{\assign_k} + \epsilon,\quad \epsilon \sim {\rm Normal}(0,\sigma_{\epsilon}^2)
\label{eq:em_model_b}
 \end{align}
\end{subequations}
For this model, the iteration given in Eq. \eqref{eq:CEM} has a simple form, which is amenable to analytical study using tools from high dimensional probability theory and extreme statistics.

Notice that conditioning on the assignment removes the dependence of the likelihood on the $\query$ and turns the problem of estimating $\val$ into a linear regression problem; that is
\begin{equation}
p(y_k|X^k,\assign_k,\val,\query) = p(y_k|X^k,\assign_k,\val)=  \frac{1}{\sigma_{\epsilon} \sqrt{2\pi}}e^{-(y_k-  \bx_{\assign_k}^k \cdot \val)^2/2\sigma_{\epsilon} ^2}.
\end{equation}
In this way, $\assign$ acts as a compression of the data, which is precisely what we want for the latent variable in the EM algorithm. The marginal likelihood 
\begin{equation}
p(y_k|X^k,\val,\query)= \sum_{j}w_{k,j}(\query) p(y_k|X^k,\assign_k=j,\val,\query) 
\end{equation}
is related to the \emph{diverse density} described in \cite{maron1998framework,zhang2001emdd}. A key difference, however, between our model and theirs is the query--value geometry. Specifically, in their model the bag label is determined by the distance to the ``concept point,'' which in our case is the query.

The posterior probabilities of assignments will be written as 
\begin{equation}
\hat{p}_{k,j}(\val,\query) \coloneqq p(\assign_k=j \mid y_k,X^k,\val,\query)
\end{equation}
which are given by 
\begin{align}
\hat{p}_{k,j}(\val,\query)
&= \frac{p(y_k\mid \assign_k=j,X^k,\val)w_{k,j}(\lambda \query)}{\sum_{j'}p(y_k\mid \assign_k=j',X^k,\val)w_{k,j'}(\lambda \query)} = \frac{\exp\!\left(-\frac{(y_k - \val \cdot \bx^k_j)^2}{2 \sigma_{\epsilon}^2} + \lambda \query \cdot \bx^k_{j}\right)}{\sum_{j'} \exp\!\left(-\frac{(y_k - \val \cdot \bx^k_{j'})^2}{2 \sigma_{\epsilon}^2} + \lambda \query \cdot \bx^k_{j'}\right)}.
\end{align}
and complete-data log-likelihood for bag $k$ is
\begin{equation}\label{eq:pykassignkj}
\ln p(y_k,\assign_k=j\mid X^k,\val,\query)
= -\frac{(y_k-\val \cdot \bx_j^k)^2}{2\sigma_{\epsilon}^2}
+ \lambda\, \query \cdot \bx_j^k
- \psi_k(\lambda \query)
- \ln(\sigma_{\epsilon}\sqrt{2\pi}).
\end{equation}

Combining all this, we arrive at the $Q$-function of the EM algorithm, which can be written as linear combination of terms corresponding to the value and query updates: 
\begin{align}
Q(\val,\query;\hat{p}(\val^{(t)},\query^{(t)})) &= Q_1(\val;\hat{p}(\val^{(t)},\query^{(t)})) + Q_2(\query;\hat{p}(\val^{(t)},\query^{(t)}))
\end{align}
with
\begin{align}
Q_1(\val;p)
&= -\frac{1}{2m\sigma_{\epsilon}^2}\sum_{k=1}^m\sum_{j=1}^np_{k,j}(y_k-\val \cdot \bx_j^k)^2, \\
Q_2(\query;p)
&= \frac{1}{m}\sum_{k=1}^m \left[\lambda \query \cdot  \sum_{j=1}^n p_{k,j}\bx_j^k -\psi_k(\lambda \query)\right].
\end{align} 
These functions are to be maximized independently to obtain the query and value updates.
The value update is straightforward to solve and has the form of a weighted least-squares problem, and therefore has a closed-form solution. The query update is also a convex optimization problem when $\hat{p}_{k,j}> 0$, since $\psi_k$ is convex, however it does not have a closed form solution. In the next subsection we introduce an approximation in the hardened limit.

\subsection{Assignment iterations for the hardened limit}
The extremal regression model introduced in the introduction can be understood as the limit of Eq. \eqref{eq:em_model_b} as $\sigma_{\epsilon}^2,\lambda^{-1} \to 0$. Specifically, to obtain Eq. \eqref{eq:hardmax}, we define  
\begin{equation}
\kappa \coloneqq \frac{\lambda}{\tau},\quad \tau \coloneqq  \lambda + \frac{1}{2 \sigma_{\epsilon}^2}.
\end{equation}
and take $\tau \to \infty$ while keeping $\kappa$ fixed. 
This leads to the hard assignment update
\begin{equation}\label{eq:assignkappa}
 \assignext_k^{\kappa}(\query,\val) = \operatorname{argmax}_{j}\left[\kappa\,\bx_j^k\cdot\query- (1-\kappa)(y_k-\bx_j^k\cdot\val )^2\right].
\end{equation}
We denote the full assignment mapping $\assignextv^{\kappa}: \reals^d \times \reals^d \to [n]^m$ by
\begin{equation}
\assignextv^{\kappa}(\query,\val) \coloneqq (\assignext_1^{\kappa}(\query,\val),\dots,\assignext_m^{\kappa}(\query,\val)). 
\end{equation}
This tells us how to compute the expectation (E-step) in the hard limit of the EM algorithm, since in the $\tau \to \infty$ limit the expectation is simply given by the index $\assignext_k^{\kappa}(\query,\val)$ term in the sum for each bag $k \in [m]$. 

Now let $\querymap(\assign)$ and $\valmap(\assign)$ be the MLE for the query and values given an assignment $\assign$. That is, these are obtained by maximizing $Q_1$ and $Q_2$ respectively. The precise estimates we use are given in the next subsection.   An iteration on assignment space is given by
\begin{equation}\label{eq:kappaEM}
\assignv^{(t+1)} =  \assignextv^{\kappa}(\hat{\query}(\assignv^{(t)}),\valmap(\assignv^{(t)}))
\end{equation}
Taking $\assignv^{(0)} \sim {\rm Uniform}([n]^m)$, this gives an algorithm for finding $\assignv$.  We will refer to this as the ${\rm EM}_{\kappa}$ iteration. 
In subsequent sections we investigate properties of this iteration when applied to Gaussian instances with well-specified labeling ( meaning the labels are derived from the model in the limit $\tau \to \infty$). Importantly, in this limit the data does not depend on $\kappa$. In other words, when $\lambda = O(1/\sigma_{\epsilon}^2)$ and $\lambda \to \infty$ the data ceases to depend on either parameter, yet the limiting behavior of the EM algorithm does. $\kappa$ should therefore be treated as a tuning parameter when dealing with such extremal data. 

For the boundary case $\kappa=0$, we write
\begin{equation}
\assignv^{(t+1)} = \hat{\assignv}^0(\valmap(\assignv^{(t)})). 
\end{equation}
 Technically, this limiting case of EM$_{\kappa}$ assumes that the labeling is far less noisy than the selection, thus there is no point in using the signal from the selection rule (which comes from $Q_2$) when we have perfect information from the labels. However, since the query MLE does not have a closed form, it may still be advantageous to use it in some situations where both the selection and labeling are noiseless. The query can also be computed after the iteration is run for some time. 

When $\query \propto \val$ we could use the value map as input for the query argument in the assignment map: 
\begin{equation}\label{eq:DDEM}
\assignv^{(t+1)} =  \assignextv^{\kappa}(\valmap(\assignv^{(t)}),\valmap(\assignv^{(t)}))
\end{equation}
This makes sense when the query and value are aligned, since there is no need to rely on the approximate query $\hat{\query}_a$. We denote Eq.~\eqref{eq:DDEM} by $\widetilde{\rm EM}_{\kappa}$ to distinguish it from the EM$_{\kappa}$ algorithms which are derived for any $\val$.  In this special case $\kappa=1$, $\widetilde{\rm EM}_1$ becomes a linearization of the EM-DD algorithm from \cite{zhang2001emdd}. In particular, the formulation in \cite{zhang2001emdd} uses a nonlinear labeling scheme where the bag label is $y_k = \max_j e^{-\gamma ||\val -\bx^k_j ||_2^2}$.  If $\gamma$ is small and simultaneously $d$ is large so $\bx^k_j$ are concentrated on the sphere (or the instances are normalized), the first order approximation is $e^{-\gamma||\val -\bx^k_j ||_2^2} \approx \val \cdot \bx_j^k + \text{constant terms}$.

\subsection{Query and value updates} 
 Putting the hard assignment into $Q_1$ yields the value mapping 
 \begin{equation}
 \valmap(\assignv) = {\rm argmin}_{\val \in \reals^d}\|\by - X(\assignv)\val\|_2^2. 
 \end{equation}
 In other words, $\valmap(\assign)$ is the solution to the regression problem with design matrix $X(\assignv)$. The maximization problem for the query does not have a closed form solution and there are different approaches we can take to update it. The gradient of the $Q_2$ function is 
\begin{equation}
\frac{1}{\lambda}\nabla Q_2(\query;p(\val^{(t)},\query^{(t)})) = \frac{1}{m}\sum_{k=1}^m \left[\sum_{j=1}^n p_{k,j}(\val^{(t)},\query^{(t)})\bx_j^k - \frac{1}{\lambda}\nabla \psi_k(\lambda \query) \right]
\end{equation}
where we have used $\lambda = \tau\kappa$.
Note that 
\begin{equation}
\frac{1}{\lambda} \nabla  \psi_k(\lambda \query)  = \frac{\sum_{j}\bx^k_je^{\lambda \bx^k_j \cdot \query}}{\sum_{j'}e^{\lambda \bx^k_{j'} \cdot \query}},
\end{equation}
which we recognize as softmax pooling of the $k$th bag, also known as the Modern Hopfield Update rule \cite{ramsauer2021hopfield}. 
When $\assignv = \assignv^{(t)}$, the limit $\tau \to \infty$ gives 
\begin{equation}
 Q_2(\query;p(\val^{(t)},\query^{(t)})) \to (1/\lambda)Q^*(\query,\assignv^{(t)}) \coloneqq \frac{1}{m}\sum_{k=1}^m\left[\query \cdot \bx_{\assign_k^{(t)}}^k - \max_{1 \le j \le n}\bx_j^k \cdot \query\right] 
\end{equation}
In general, $Q^*(\query,\assignv)$ does not have a unique maximizer on the unit sphere; the function is maximized on a cone where ${\assign_k} = \operatorname{argmax}_{1 \le j \le n}\bx_j^k \cdot \query$. However, as $n$ grows these cones converge to points 
which, for Gaussian instance, are well approximated by the normalized average over instances:
\begin{equation}\label{eq:qhat_a_def}
\hat{\query}_a(\assignv) \coloneqq  \frac{\bar{\bx}_{\assignv}}{\|\bar{\bx}_{\assignv}\|}, \qquad \bar{\bx}_{\assignv} \coloneqq \frac{1}{m}X(\assignv)^T{\bf 1}  = \frac{1}{m}\sum_{k=1}^m \bx_{\assign_k}^k.
\end{equation}
In the next section we will provide a justification for using $\hat{\query}_a(\assignv)$ as an approximation the optimization problem for $Q^*(\query,\assignv)$. Replacing $\querymap$ with $\hat{\query}_a$ in Eqns \eqref{eq:kappaEM} and \eqref{eq:DDEM} yields two families of iterations with a single parameter, $\kappa$.

\section{Examples on synthetic data}\label{sec:numerics}

\subsection{Experimental setup}
As proof of concept and to establish some basic properties of the various iterations, we perform simulations where the model is well-specified and the instance distribution is i.i.d.\ Gaussian. This means the true labels are generated by the extremal pooling function with no observation noise $(\sigma_{\epsilon}=0)$: 
\begin{equation}\label{eq:datassump}
\bx^k_{j} \underset{\text{i.i.d}}{\sim } {\rm Normal}(0,\Sigma),\quad y_k = h_{\infty}(X^k,\querytrue,\valtrue).
\end{equation}
We once again emphasize that $\kappa$ is absent from the data generating process in the limit $\tau \to \infty$, and is therefore treated as an algorithmic parameter.  It should be noted that if we knew our data was generated in this way, we could find $\val$ and $\query$ by searching all possible assignments for a small number of bags (say $2$) and performing regression on each one. This is however highly sensitive to measurement noise. More importantly, our goal in this paper is not to solve the MIL problem for this particular data, rather to use this limiting case to probe the properties of the EM algorithm where there is strong selection.

\begin{figure}[h!]
\centering
\includegraphics[width=0.8\textwidth]{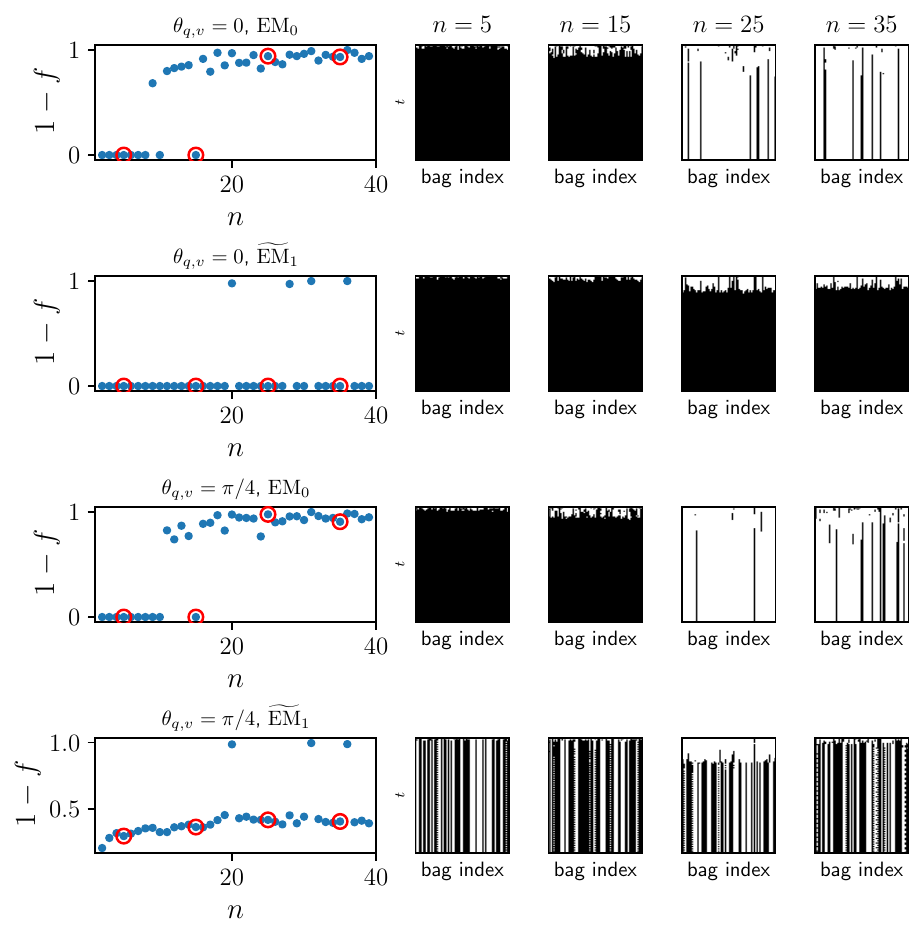}
\caption{Comparison of the ${\rm EM}_{0}$ iteration~\eqref{eq:kappaEM} (top row) and $\widetilde{\rm EM}_1$~\eqref{eq:DDEM} (bottom row) as a function of the number of instances per bag $n$ at $t=100$ steps from a uniformly random initial assignment. The image plots (right) show which bags have correctly identified active instances. For clarity, we only show a subset of the instances. The $n$ values corresponding to these examples are circled (left). Parameter values: $\theta_{\querytrue,\valtrue}=\pi/4$, $m=5000$, $d=15$, instances i.i.d.\ $\mathrm{Normal}(0,I_d)$.}
\label{fig:2}
\end{figure}

\begin{figure}[h!]
\centering
\includegraphics[width=0.9\textwidth]{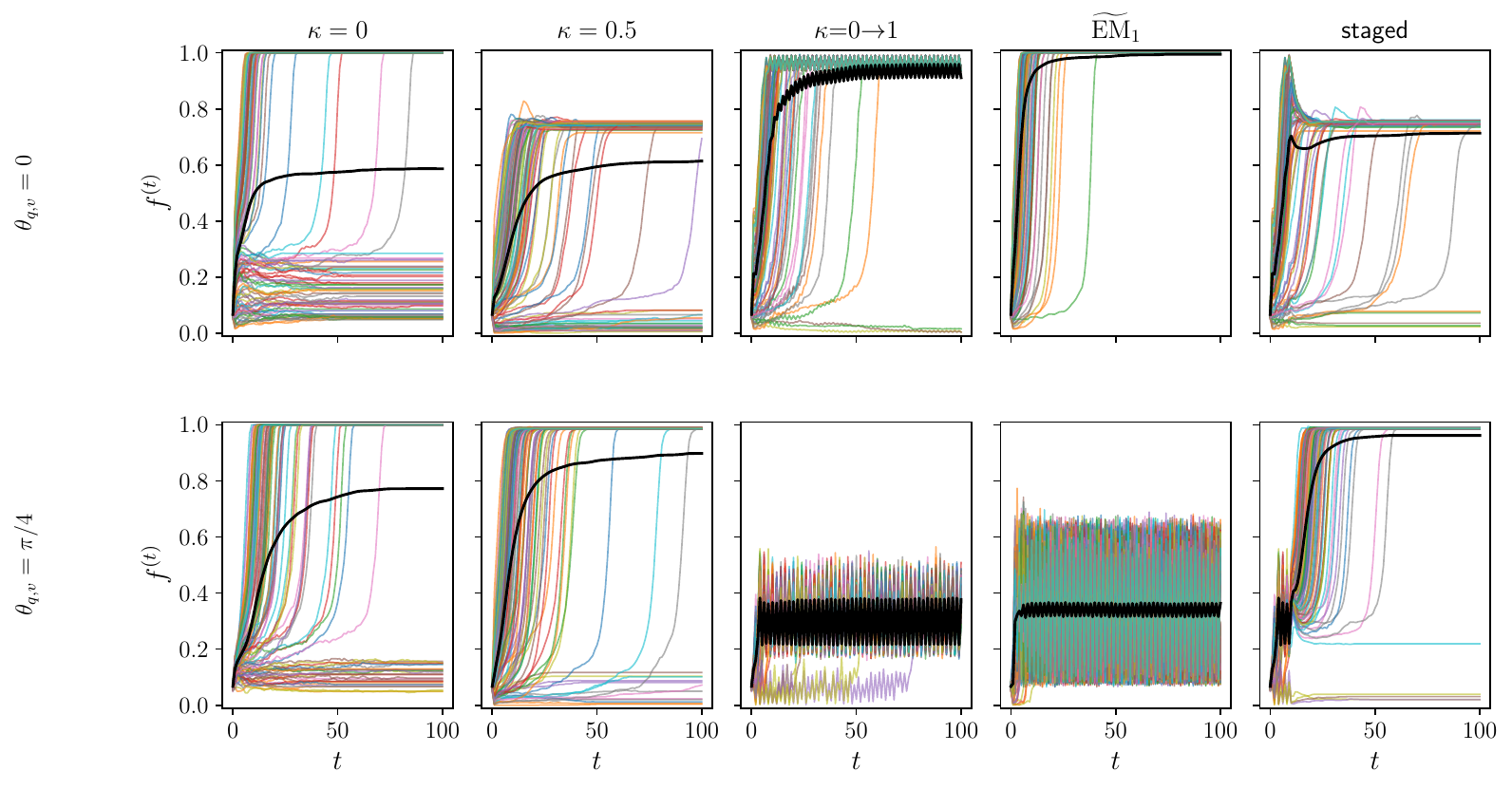}
\caption{Trajectories of the match fraction $f^{(t)}$ over iterations. Each panel shows a different variant of the hard EM iteration. Faint colored curves are individual trajectories initialized from random assignments. The bold black curve is the mean over all trajectories. Parameter values: $m=1000$, $n=50$, $d=10$, $\theta_{\querytrue,\valtrue}=0$.}
\label{fig:3}
\end{figure}

\begin{figure}[h!]
\centering
\includegraphics[width=0.9\textwidth]{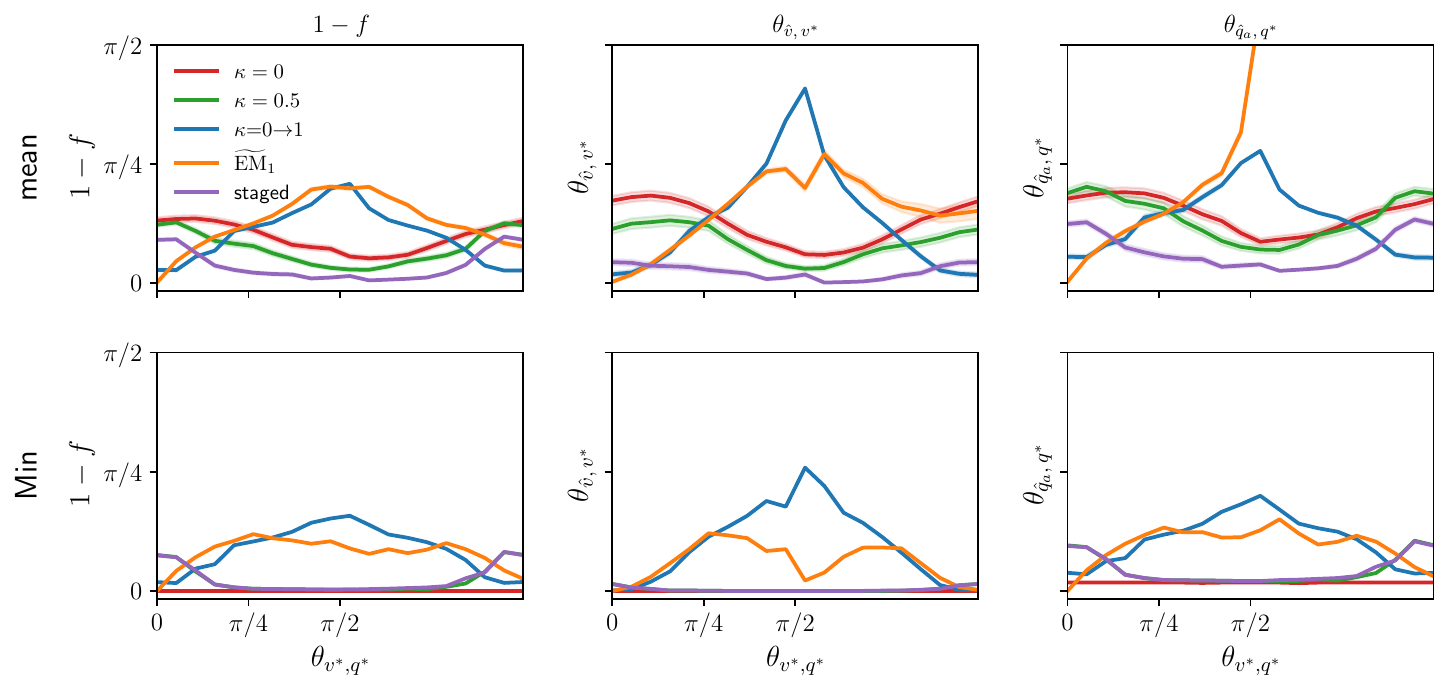}
\caption{Performance of the ${\rm EM}_{\kappa}$ schedules, $\widetilde{\rm EM}_1$, and the staged method after $t=100$ iterations, as a function of the misalignment angle $\theta_{\valtrue,\querytrue}$. Columns show the match-fraction error $1-f$ (left), the value angle $\theta_{\valmap,\valtrue}$ (middle), and the query angle $\theta_{\hat{\query}_a,\querytrue}$ (right). The top row shows the mean over the ensemble of random initial assignments (shaded bands: $\pm$ one standard error); the bottom row shows the minimum over the same ensemble. Parameter values: $m=1000$, $n=15$, $d=10$, $400$ replicates per value of $\theta_{\valtrue,\querytrue}$.}
\label{fig:4}
\end{figure}

\subsection{A comparison of ${\rm EM}_{\kappa}$ and $\widetilde{\rm EM}_1$}
Figure~\ref{fig:2} presents numerical experiments where  ${\rm EM}_{0}$ and $\widetilde{\rm EM}_1$ are run  for a range of $n$ values with two different angles $\theta_{\querytrue,\valtrue}$. We have plotted the match fraction $f^{(t)} = 1- d_H(\assignv^{(t)},\assigntruev)/m$ at $t = 100$. 

From Figure \ref{fig:2}, we observe the following:  First, for both angles considered, ${\rm EM}_{0}$ achieves near-zero error for sufficiently small $n$, but there appears to be a threshold above which the algorithm does not converge for the sampled assignment. 
Second, the $\widetilde{\rm EM}_1$ algorithm converges for nearly all $n$ at the zero angle, but not $\pi/4$.
The failure at higher angles is to be expected since it violates the underlying assumption of aligned query and value. Interestingly, the match fraction in the trajectories of $\widetilde{\rm EM}_1$ also converge, but not to $f=1$.
 Intuitively, when the query and value are aligned, $\widetilde{\rm EM}_1$ has an advantage coming from the fact that the assignment and value step (E and M) use different pieces of information about the data generating process; namely, the labeling and selection rule.

\subsection{Dependence on $\kappa$ schedule}
In addition to the $\widetilde{\rm EM}_1$ and EM$_0$ simulations we performed experiments with EM$_{0.5}$  and an alternating schedule denoted $\kappa=0\to1$ or EM$_{0\to 1}$ where the assignment step alternates between $\assignextv^0$ and $\assignextv^1$.  We show simulations sampled from random initial assignments for the different methods in Fig \ref{fig:3}. 
 
  The idea of the $\kappa=0\to1$ schedule is to obtain some benefits  of $\widetilde{\rm EM}_1$, but in a method which allows for distinct $\query$ and $\val$ vectors.
 An obvious issue is that the query step is only performed after an assignment update based on the value. The query update $\hat{\query}_a$ is, however, leveraging correlations in the assigned instances at the current setup which need to have been introduced in prior steps, thus this method ends up performing only marginally better than $\widetilde{\rm EM}_1$.
 
From Figure \ref{fig:3} we see that there is a tradeoff: EM$_{0}$ schedule is able to retrieve the correct assignment from a random assignment with small probability, the mixed schedules EM$_{0.5}$ and EM$_{0\to 1}$ are more reliable, but sensitive to the angle, much like $\widetilde{\rm EM}_1$. $\widetilde{\rm EM}_1$ and EM$_{0\to 1}$ converge quickly, but often to the wrong value, and for fixed $\kappa$ the basin of attraction appears very small. Motivated by this, we tested a staged method where the first $20$ steps are run with EM$_{0\to 1}$.
 In Figure \ref{fig:4} we show the mean final match fraction after $t=100$ iterations (top row) and the minimum over the ensemble of instances for all methods, including the staged one. The staged method outperforms all others except at low angles. A detailed study of the various schedules and addressing questions such as the optimal time to switch the $\kappa$ schedule in the staged method is beyond the scope of this work.

\section{Asymptotics results for many bags}\label{sec:manybag}

Here we present results on the limit behavior of the query and value map evaluated on a random assignment. Our result for the value map tells us how many initial assignments we must sample in order for one to give a good approximation of the true assignment after one step of $\widetilde{\rm EM}_1$ when $\theta_{\valtrue,\querytrue}=0$.

\subsection{Preliminaries}

We recall the definition of a sub-Gaussian random vector and a tail bound on empirical covariance matrices with sub-Gaussian rows. 

\begin{defn} A random vector $\bx$ is sub-Gaussian if there is a constant $K$ such that
\begin{equation}
\sup_{\bu \in S^{d-1}} P\!\left(|\langle \bx,\bu \rangle| > t\right) \le 2e^{-(t/K)^2}.
\end{equation}
\end{defn}
Note that for a Gaussian $K$ is just the standard deviation. The following result is standard; e.g. see  \cite{vershynin2018high}.

\begin{lem}\label{lem:conc}
Let $X \in \mathbb{R}^{m\times d}$ have $m$ i.i.d.\ sub-Gaussian rows, the second moment matrix
\begin{equation}
\hat{\Sigma} = \frac{1}{m} X^T X \in \mathbb{R}^{d\times d}
\end{equation}
satisfies the tail bound
\begin{equation}\label{eq:conc2}
\bbP\left(\|\hat{\Sigma} - \Sigma \| >C K^2||\Sigma||\left(\sqrt{\frac{d+\delta}{m}} + \frac{d+\delta}{m} \right)\right)
< 2e^{-\delta}
\end{equation}
where $\Sigma = \E[\hat{\Sigma}]$ and $K$ is the sub-Gaussian norm of the rows of $X$ and $C$ is a constant. 
In particular, for $\epsilon \ll K^2||\Sigma||$ there is a high probability that 
\begin{equation}
\|\hat{\Sigma} - \Sigma \|= O(\epsilon)
\end{equation}
when $m \gtrsim dK^4||\Sigma||^2/\epsilon^2$. 
\end{lem}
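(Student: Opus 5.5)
We follow the standard covering-net argument (as in Vershynin's treatment of covariance estimation) and carry it out directly. First we reduce to the isotropic-in-scale case: write $\hat{\Sigma}-\Sigma = \frac{1}{m}\sum_{k=1}^m (\bx_k\bx_k^T - \Sigma)$, where $\bx_k$ are the rows of $X$. Since this matrix is symmetric, its spectral norm is $\sup_{\bu\in S^{d-1}} |\bu^T(\hat{\Sigma}-\Sigma)\bu|$. Fix a $1/4$-net $\mathcal N$ of $S^{d-1}$ with $|\mathcal N|\le 9^d$. The usual approximation argument for symmetric matrices gives $\|\hat{\Sigma}-\Sigma\|\le 2\max_{\bu\in\mathcal N}|\bu^T(\hat{\Sigma}-\Sigma)\bu|$. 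This reduces the problem to controlling finitely many scalar averages.

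For a fixed $\bu\in\mathcal N$, set $Z_k=\langle \bx_k,\bu\rangle^2-\E\langle\bx_k,\bu\rangle^2$. The $Z_k$ are independent and mean zero. Since $\langle\bx_k,\bu\rangle$ is sub-Gaussian with parameter $K$, the square is sub-exponential with $\psi_1$-norm $\lesssim K^2$; centering costs only a constant factor. Bernstein's inequality for sub-exponential variables then gives
\begin{equation*}
\bbP\Big(\Big|\frac1m\sum_k Z_k\Big|>s\Big)\le 2\exp\!\Big(-c\,m\min\big(s^2/K^4,\ s/K^2\big)\Big).
\end{equation*}
We choose $s = C' K^2\big(\sqrt{(d+\delta)/m}+(d+\delta)/m\big)$. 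Then $m\min(s^2/K^4,s/K^2)\ge C'' (d+\delta)$, and with $C''$ large enough a union bound over the $9^d$ net points leaves total failure probability at most $2\cdot 9^d e^{-C''(d+\delta)}\le 2e^{-\delta}$.

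The main subtlety is the factor $\|\Sigma\|$ in the stated bound. With $K$ defined as the uniform sub-Gaussian constant, the bound above is already $\lesssim K^2(\cdots)$. To obtain the form $K^2\|\Sigma\|$, we interpret $K$ as the sub-Gaussian norm of the whitened vector $\Sigma^{-1/2}\bx$ (when $\Sigma$ is invertible; otherwise we restrict to its range). We run the argument for $\Sigma^{-1/2}\hat{\Sigma}\Sigma^{-1/2}-I$ and then use $\|\hat{\Sigma}-\Sigma\|\le\|\Sigma\|\,\|\Sigma^{-1/2}\hat{\Sigma}\Sigma^{-1/2}-I\|$. For Gaussian rows, which are the case used later, this normalization is harmless.

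The final claim follows by setting $\delta$ to a constant, or $\delta=\ln(1/\eta)$ for failure probability $\eta$. If $m\gtrsim dK^4\|\Sigma\|^2/\epsilon^2$ with $\epsilon\ll K^2\|\Sigma\|$, then $(d+\delta)/m\ll 1$, so the square-root term dominates and is $O(\epsilon/(K^2\|\Sigma\|))$. Multiplying by the prefactor $K^2\|\Sigma\|$ gives $\|\hat{\Sigma}-\Sigma\|=O(\epsilon)$ with high probability. The only step requiring real care is the sub-exponential Bernstein bound together with the tracking of the $\min$ between its two regimes. The net and union-bound steps are routine.
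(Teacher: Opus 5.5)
Your proof is correct and is exactly the standard argument the paper defers to. The paper gives no proof of its own, only a pointer to Vershynin, and that argument is the one you wrote: reduction to the isotropic case via $\Sigma^{-1/2}$, a $1/4$-net of size at most $9^d$, sub-exponential Bernstein in each fixed direction, and a union bound. Your reading of $K$ as the sub-Gaussian norm of the whitened row $\Sigma^{-1/2}\bx$ (equivalently $\|\langle \bx,\bu\rangle\|_{\psi_2}\le K\|\langle \bx,\bu\rangle\|_{L^2}$ for all $\bu$, which is Vershynin's hypothesis) is the right one, and the change is not merely cosmetic: under the paper's gloss that $K$ is the standard deviation for a Gaussian, $K^2\|\Sigma\|$ scales like the fourth power of the data scale while $\|\hat{\Sigma}-\Sigma\|$ scales like the square, so the bound as literally stated fails for rows of small scale.
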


Owing to the fact that when we condition on $d_H$ the rows of $X(\assignv)$ are distributed according to the extremal statistics of i.i.d.\ Gaussians, it is important to have some estimates of these statistics. 
Let $X_1,\dots,X_n$ be i.i.d.\ ${\rm Normal}(0,1)$ random variables and let
\[
\assigntrue=\operatorname{argmax}_{1\le j\le n} X_j.
\]
Define the following moments associated with the extremal instance:
\begin{equation}
\mu \coloneqq \mathbb{E}[X_{\assigntrue}], \qquad
s \coloneqq \mathbb{E}[X_{\assigntrue}^2], \qquad \quad w \coloneqq \mathbb{E}[X_j^2 \mid j\neq \assigntrue],\qquad 
v \coloneqq \mathbb{E}[X_{\assigntrue} X_j \mid j\neq \assigntrue], 
\end{equation}
Two important identities are the following. 
\begin{lem}\label{lem:vw}
$v = (1-s)/(n-1)$ and $w =v+1$
\end{lem}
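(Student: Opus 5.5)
Both identities come from exchangeability of $X_1,\dots,X_n$ together with two elementary symmetric sums: $\sum_{j=1}^n X_j^2$ and $\sum_{j} X_j = $ the sum itself. I read the conditional expectations as averages over the non-maximal indices: $w = \E[\frac{1}{n-1}\sum_{j\neq\assigntrue} X_j^2]$ and $v = \E[\frac{1}{n-1}\sum_{j\ne \assigntrue} X_{\assigntrue}X_j]$. By exchangeability, this is the same as fixing a particular $j$ and conditioning on the event $\{j\neq\assigntrue\}$, which has probability $(n-1)/n$.

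\textbf{The identity for $w$.} Start from $\E[\sum_j X_j^2]=n$, which splits as $s + (n-1)w = n$. This gives
\[
w = \frac{n-s}{n-1} = 1 + \frac{1-s}{n-1}.
\]
So $w = 1 + v$ will follow once $v=(1-s)/(n-1)$ is established.

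\textbf{The identity for $v$.} Consider $\E\big[X_{\assigntrue}\sum_{j=1}^n X_j\big] = s + (n-1)v$. To evaluate the left side, decompose $X_j = \bar X + (X_j-\bar X)$, where $\bar X = \frac1n\sum_j X_j$. For i.i.d.\ standard Gaussians, $\bar X$ is independent of the residual vector $(X_j-\bar X)_j$. The argmax index $\assigntrue$ is a function of the residuals alone, since adding a constant to every coordinate does not change which one is largest. Hence
\[
X_{\assigntrue} = \bar X + R_{\assigntrue},
\]
where $R_{\assigntrue}$ is independent of $\bar X$. Therefore
\[
\E[X_{\assigntrue}\, n\bar X] = n\,\E[\bar X^2] + n\,\E[R_{\assigntrue}]\,\E[\bar X] = n\cdot\tfrac1n + 0 = 1.
\]
Thus $s+(n-1)v=1$, i.e.\ $v=(1-s)/(n-1)$. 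Combined with the first step, this gives $w=v+1$.

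An alternative route to $\E[X_{\assigntrue}\sum_j X_j]=1$ is Gaussian integration by parts. Write $X_{\assigntrue}=\max_j X_j$; its gradient is $e_{\assigntrue}$ almost everywhere, so $\E[X_j \max_i X_i] = \E[1\{j=\assigntrue\}]$, and summing over $j$ gives $1$.

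The only delicate point is justifying the independence of $\bar X$ from the residuals (or, on the other route, the Stein identity for the Lipschitz function $\max$). I would cite either Gaussian orthogonal invariance or the a.e.\ differentiability of Lipschitz functions, so no real obstacle is expected.
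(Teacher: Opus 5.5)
Your proposal is correct. The bookkeeping is the same as the paper's: $\E[\sum_j X_j^2]=s+(n-1)w$ and, by exchangeability, $\E[X_{\assigntrue}\sum_j X_j]=s+(n-1)v$. The difference is in how you evaluate $\E[X_{\assigntrue}\sum_j X_j]=1$.

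The paper works one coordinate at a time. It conditions on $M=\max_{j>1}X_j$, which is independent of $X_1$, so that $X_{\assigntrue}=\max(X_1,M)$. From explicit Gaussian tail integrals it computes $\E[X_{\assigntrue}X_1\mid M=t]=\E[X_1^2 1\{X_1>t\}]+t\,\E[X_1 1\{X_1<t\}]=1-\Phi(t)$. Averaging over $M$ gives $\E[X_{\assigntrue}X_1]=\bbP(X_1>M)=1/n$, and it then sums over coordinates. That computation is exactly the one-dimensional Stein identity for $x\mapsto\max(x,t)$, so your Gaussian integration-by-parts alternative is the paper's argument in general form.

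Your primary route is genuinely different. You split off the sample mean and use two facts: $\bar X$ is independent of the residuals, and the argmax is translation invariant. This needs no integrals and gives the summed identity in one line from $\mathrm{Var}(\bar X)=1/n$. On the other hand, it is tailored to the exchangeable Gaussian setting and only produces the symmetric sum, which suffices here by exchangeability. The conditioning/Stein computation instead gives each $\E[X_iX_{\assigntrue}]$ separately. It also extends directly to correlated Gaussians, e.g.\ $\E[X_i\max_j X_j]=\sum_j\Sigma_{ij}\,\bbP(\assigntrue=j)$.
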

The proof is given in Appendix \ref{app:proofs}.  Additionally, we will use the following asymptotic result, which is standard (e.g. see \cite{naess2024applied})
\begin{lem}\label{lem:extrememoments} As $n\to\infty$, $\mu \sim \sqrt{2\ln n}$  and $s \sim 2\ln n$. 
\end{lem}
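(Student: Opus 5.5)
The statement is the classical extreme-value asymptotics for the maximum of $n$ i.i.d.\ standard Gaussians: $\mu = \E[M_n]$ and $s = \E[M_n^2]$ with $M_n = \max_j X_j$. The plan is to show that $M_n/\sqrt{2\ln n} \to 1$ in probability, and then upgrade this to convergence of the first two moments by uniform integrability.

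\textbf{Upper bound via a union bound.} For $t>0$,
\[
\bbP(M_n > t) \le n\,\bbP(X_1>t) \le n e^{-t^2/2}.
\]
Taking $t = (1+\varepsilon)\sqrt{2\ln n}$ gives $\bbP(M_n > t) \le n^{-2\varepsilon-\varepsilon^2} \to 0$. Integrating the same tail,
\[
\E[(M_n^+)^2] = \int_0^\infty 2t\,\bbP(M_n>t)\,dt \le t_0^2 + \int_{t_0}^\infty 2t\,n e^{-t^2/2}\,dt = t_0^2 + 2n e^{-t_0^2/2}.
\]
With $t_0=\sqrt{2\ln n}$ this yields $\E[(M_n^+)^2] \le 2\ln n + 2$. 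By Jensen, $\E[M_n^+] \le \sqrt{2\ln n + 2}$.

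\textbf{Lower bound via independence.} Use the Mills ratio bound $\bbP(X_1>t) \ge \frac{t}{1+t^2}\,\frac{e^{-t^2/2}}{\sqrt{2\pi}}$. At $t=(1-\varepsilon)\sqrt{2\ln n}$ this gives $\bbP(X_1>t) \ge c\, n^{-(1-\varepsilon)^2}/\sqrt{\ln n}$. Hence
\[
\bbP(M_n \le t) = (1-\bbP(X_1>t))^n \le \exp\bigl(-c\, n^{1-(1-\varepsilon)^2}/\sqrt{\ln n}\bigr) \to 0,
\]
so $M_n/\sqrt{2\ln n} \to 1$ in probability.

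\textbf{Passing to moments.} The negative part is negligible: $M_n^- \le |X_1|$, so $\E[M_n^-]$ and $\E[(M_n^-)^2]$ are $O(1)$. Since $\E[(M_n/\sqrt{2\ln n})^2] \le 1 + o(1)$, the family $M_n/\sqrt{2\ln n}$ is bounded in $L^2$, hence uniformly integrable in $L^1$. Convergence in probability then gives $\mu/\sqrt{2\ln n} \to 1$.

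For the second moment, Fatou's lemma applied to the convergence in probability (along subsequences) gives $\liminf s/(2\ln n) \ge 1$. The upper bound from the union-bound step gives $\limsup s/(2\ln n) \le 1$. Together these give $s \sim 2\ln n$.

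\textbf{Main obstacle.} The only delicate step is the second moment, where $L^2$ boundedness alone does not give uniform integrability of the squares. Pairing Fatou's lemma with the explicit tail integral avoids this, so no higher-moment bound is needed.
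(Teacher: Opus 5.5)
Your argument is correct. It differs from the paper only because the paper gives no proof: the lemma is quoted as standard, with a pointer to the applied extreme-value literature, which implicitly means the Gumbel limit theorem for Gaussian maxima.

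That classical route gives more information. It supplies the Gumbel centering and scaling constants, and hence lower-order corrections to $\mu$ and $s$ of relative size $\ln\ln n/\ln n$. However, convergence in distribution to a Gumbel law does not by itself give convergence of $\mu$ and $s$. It still has to be supplemented by a uniform-integrability or tail-integral argument of exactly the type you give.

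Your route is self-contained. It uses only the union bound, the Mills-ratio lower bound, Vitali for the first moment, and Fatou plus the explicit tail integral for the second moment. The last step correctly sidesteps the fact that $L^2$-boundedness does not make the squares uniformly integrable.

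As a by-product, your route produces non-asymptotic bounds: $\bbP(M_n>t)\le n e^{-t^2/2}$, $\mu\le\sqrt{2\ln n+2}$ and $s\le 2\ln n+O(1)$. These are the kind of quantitative control the paper tacitly needs later, in the proof of Theorem~\ref{thm:conc}. There it treats the sub-Gaussian scale of the extremal instance as $O(\sqrt{s})$ and uses $c_n=O(s)$. The purely asymptotic statement of the lemma does not supply those bounds.
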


\subsection{The value vector}
 If $\hat{\Sigma}(\assignv,\assignv)$ is invertible, which is true almost surely, we have 
\begin{equation}\label{eq:regxi}
\valmap(\assignv) = \hat{\Sigma}(\assignv,\assignv)^{-1}\frac{1}{m}X(\assignv)^T\by
= \hat{\Sigma}(\assignv,\assignv)^{-1}\hat{\Sigma}(\assignv,\assigntruev)\valtrue.
\end{equation} 
The idea is to approximate the matrices $\hat{\Sigma}(\assignv,\assignv)$ and $\hat{\Sigma}(\assignv,\assigntruev)$ appearing in Eq.~\eqref{eq:regxi} with their averages using Eq. \eqref{eq:conc2}.

\begin{lem}\label{lem:gammamean}
Assume Eq. \eqref{eq:datassump} and without loss of generality we take $\querytrue = \be_1$. 
Let $d_H = d_H(\assignv,\assigntruev)$ and define 
\begin{align}
\bar{A}(f)&\coloneqq \E[\hat{\Sigma}(\assignv,\assignv) \mid d_H/m= 1-f], \\
\bar{B}(f) &\coloneqq \E[\hat{\Sigma}(\assignv,\assigntruev) \mid d_H/m = 1-f]
\end{align}
where the expectations are taken over both the conditional distribution of $\assign$ and the $X^k_{i,j}$.
Then 
\begin{align}
\bar{A}(f) &= \Sigma  + \rho_n(f) \Sigma_{1,1}^{-1}\Sigma_1\Sigma_1^T\\
\bar{B}(f) &= f \Sigma +\rho_n(f) \Sigma_{1,1}^{-1}\Sigma_1\Sigma_1^T
\end{align}
where
\begin{align}
\rho_n(f) \coloneqq  f (s-1) + (1-f)(w-1)
\end{align}
\end{lem}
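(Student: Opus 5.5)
The plan is to reduce everything to a one-dimensional computation along the query direction, using the Gaussian conditional decomposition. Since $\querytrue=\be_1$, the active instance in bag $k$ is $\assigntrue_k=\operatorname{argmax}_j x^k_{j,1}$, so selection depends only on first coordinates. Write $g^k_j \coloneqq x^k_{j,1}/\sqrt{\Sigma_{1,1}}$, which are i.i.d.\ $\mathrm{Normal}(0,1)$, and decompose each instance as
\[
\bx^k_j = \frac{g^k_j}{\sqrt{\Sigma_{1,1}}}\,\Sigma_1 + \bz^k_j,
\]
where $\Sigma_1$ is the first column of $\Sigma$. By Gaussian regression, $\bz^k_j \sim \mathrm{Normal}(0,\Sigma-\Sigma_{1,1}^{-1}\Sigma_1\Sigma_1^T)$. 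Moreover, the $\bz^k_j$ are independent of all the $g$'s and mutually independent across $j,k$.

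I will take the conditional law of $\assignv$ given $d_H/m=1-f$ to depend only on the indices $\assigntruev$, not on the remaining data. Concretely, a fraction $f$ of bags receive $\assign_k=\assigntrue_k$, and the rest receive an index $\assign_k\neq\assigntrue_k$ that is uniform, or at least chosen independently of $X^k$ given $\assigntrue_k$. Since $\hat\Sigma(\assignv,\assignv')=\frac1m\sum_k \bx^k_{\assign_k}(\bx^k_{\assign'_k})^T$ is a sum over bags, linearity of expectation reduces the computation to a single bag. Each bag is of one of two types: correctly assigned (weight $f$) or incorrectly assigned (weight $1-f$).

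For a single instance with standardized first coordinate $g$, the independence of $\bz$ from the $g$'s gives
\[
\E[\bx\bx^T\mid g] = \Sigma + (g^2-1)\,\Sigma_{1,1}^{-1}\Sigma_1\Sigma_1^T .
\]
The cross terms vanish because $\E\bz=0$. Averaging over $g$ then gives the following.
\begin{itemize}
\item For a correct row, $\E[g_{\assigntrue}^2]=s$, so the bag contributes $\Sigma+(s-1)\Sigma_{1,1}^{-1}\Sigma_1\Sigma_1^T$.
\item For an incorrect row, $\E[g_j^2\mid j\ne\assigntrue]=w$, so the bag contributes $\Sigma+(w-1)\Sigma_{1,1}^{-1}\Sigma_1\Sigma_1^T$.
\end{itemize}
Weighting these by $f$ and $1-f$ gives $\bar A(f)$.

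For $\bar B(f)$, the correctly assigned bags contribute exactly as above, which is $f\Sigma+f(s-1)\Sigma_{1,1}^{-1}\Sigma_1\Sigma_1^T$ after weighting. For an incorrectly assigned bag we need $\E[\bx_j\bx_{\assigntrue}^T\mid j\neq\assigntrue]$. Here $\bz_j$ and $\bz_{\assigntrue}$ are independent and mean zero, so the $\Sigma$-part drops out. We are left with $\E[g_jg_{\assigntrue}\mid j\ne\assigntrue]\,\Sigma_{1,1}^{-1}\Sigma_1\Sigma_1^T = v\,\Sigma_{1,1}^{-1}\Sigma_1\Sigma_1^T$. By Lemma~\ref{lem:vw}, $v=w-1$, so the incorrect bags contribute $(1-f)(w-1)\Sigma_{1,1}^{-1}\Sigma_1\Sigma_1^T$. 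Summing the two contributions gives $\bar B(f)=f\Sigma+\rho_n(f)\Sigma_{1,1}^{-1}\Sigma_1\Sigma_1^T$.

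The main obstacle is justifying the conditioning step rigorously. We must check that, given the event $\{d_H/m=1-f\}$ and the index pattern, each $\bz^k_j$ is still independent of the selection and is centered. This holds because both $\assigntruev$ and the wrong-index choice are measurable with respect to the $g$'s and external randomness. We must also check that the conditional moments of $g_{\assign_k}$ coincide with the unconditional extremal moments $s$, $w$, $v$. The latter uses exchangeability of the non-maximal indices within a bag, so that conditioning on which wrong index is chosen does not bias its law.
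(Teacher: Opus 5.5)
Your proposal is correct and follows essentially the same route as the paper: both condition on the first coordinate via Gaussian regression, compute the matched, unmatched, and unmatched cross-term contributions in terms of $s$, $w$, $v$, combine them linearly with weights $f$ and $1-f$, and close with $v=w-1$ from Lemma~\ref{lem:vw}. Your explicit decomposition $\bx = g\,\Sigma_{1,1}^{-1/2}\Sigma_1 + \bz$, with $\bz$ independent of the $g$'s, is exactly what underlies the paper's conditional-covariance computation, and your remarks on measurability of the selection and exchangeability of the non-maximal indices spell out points the paper leaves implicit.
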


Note that both formulas can be written as linear interpolations between $f=0$ and $f=1$, e.g. 
\begin{align}
\bar{A}(f) = \bar{A}(0)(1-f) + \bar{A}(1)f. 
\end{align}
As a result, the proof amounts to deriving formulas for the endpoints. This is also helpful because we cannot technically apply Lemma~\eqref{lem:conc} directly to $\bar{A}(f)$ and $\bar{B}(f)$, since conditioning on $d_H$ introduces correlations among the rows. However, the rows of $\hat{\Sigma}(\assignv,\assignv)$ are conditionally independent given $1_{\{\assign_k = \assigntrue_k\}}$, and since ${\rm cov}(1_{\{\assign_k = \assigntrue_k\}},1_{\{\assign_{k'} = \assigntrue_{k'}\}}) = O(1/m)$, the conditional covariance dominates the marginal covariance asymptotically in $m$. These conditional covariance matrices have means such as $\bar{A}(1)$ and $\bar{A}(0)$.

For sufficiently large $m$, we therefore can expect $\valmap(\assignv)$ is concentrated around $\bar{A}(f)^{-1}\bar{B}(f)\valtrue$. 
To this end, we have the following result which is proved in Appendix \ref{app:proofs}. 
\begin{thm}\label{thm:conc}
Assume Eq. \eqref{eq:datassump}. Fix $f\in(0,1)$ and let $\assign$ be sampled uniformly from  
\begin{equation}
\{\assign\in[n]^m:\ d_H(\assignv,\assigntruev)/m = 1-f\}
\end{equation}
Let $\bar A,\bar B$ be as in Lemma~\ref{lem:gammamean}.
Then for $\epsilon \ll 1$, we have
\begin{equation}
\big\|\valmap(\assign) - \bar{A}(f)^{-1}\bar{B}(f)\valtrue\| = O(\epsilon)
\end{equation}
 when $m\gtrsim m_c(\epsilon)\coloneqq d (\ln n)^6/\epsilon^2$. 
\end{thm}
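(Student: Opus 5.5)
The plan is a perturbation argument around the identity $\valmap(\assignv)=\hat A^{-1}\hat B\valtrue$ from Eq.~\eqref{eq:regxi}, with $\hat A=\hat{\Sigma}(\assignv,\assignv)$ and $\hat B=\hat{\Sigma}(\assignv,\assigntruev)$. Writing $\bar A=\bar A(f)$ and $\bar B=\bar B(f)$, we decompose
\begin{equation*}
\hat A^{-1}\hat B-\bar A^{-1}\bar B=\hat A^{-1}(\hat B-\bar B)+\hat A^{-1}(\bar A-\hat A)\bar A^{-1}\bar B,
\end{equation*}
so it suffices to show three things: $\|\hat A-\bar A\|$ and $\|\hat B-\bar B\|$ are $O(\epsilon)$; $\|\bar A^{-1}\|$ is bounded uniformly in $n$; and $\|\bar A^{-1}\bar B\|$ is bounded uniformly in $n$. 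The last two are read off Lemma~\ref{lem:gammamean}. Since $\rho_n(f)\ge 0$ (as $s\ge1$ and $w\ge1$ by Lemma~\ref{lem:vw}), we have $\bar A\succeq\Sigma$, and hence $\|\bar A^{-1}\|\le\|\Sigma^{-1}\|$. The product $\bar A^{-1}\bar B$ equals $f I+(1-f)\rho_n\bar A^{-1}\Sigma_{1,1}^{-1}\Sigma_1\Sigma_1^T$. This is a rank-one correction of size $\rho_n/(1+\rho_n)\le 1$ by Sherman--Morrison, so it is $O(1)$. Once $\|\hat A-\bar A\|\le\epsilon\ll\lambda_{\min}(\Sigma)$, Weyl's inequality also gives $\|\hat A^{-1}\|\le 2\|\Sigma^{-1}\|$, and the claim follows.

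The concentration step is handled by conditioning on the match pattern $I_k=1\{\assign_k=\assigntrue_k\}$. Since $d_H/m=1-f$ is fixed, exactly $fm$ rows are matched. Given the pattern, the bags are independent, so the rows of $X(\assignv)$ split into two i.i.d. families. The first is the $fm$ matched rows, each distributed as the argmax instance $\bx_{\assigntrue}$. The second is the $(1-f)m$ unmatched rows, each distributed as a uniformly chosen non-maximal instance. The same holds for the pairs of rows of $X(\assignv)$ and $X(\assigntruev)$. We then write
\begin{equation*}
\hat A=f\hat A_1+(1-f)\hat A_0,\qquad \hat B=f\hat B_1+(1-f)\hat B_0,
\end{equation*}
with each piece an empirical second-moment matrix of i.i.d. rows. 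Their means are exactly $\bar A(1),\bar A(0),\bar B(1),\bar B(0)$, which is the endpoint interpolation noted after Lemma~\ref{lem:gammamean}. The conditional means do not depend on which bags are matched, so no marginal-versus-conditional correction is needed.

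For $\hat A_1$ and $\hat A_0$ we apply Lemma~\ref{lem:conc} directly with $m$ replaced by $fm$ or $(1-f)m$. For the cross terms $\hat B_i$ we use polarization: for instance $\hat B_0=\frac14[\hat S_+-\hat S_-]$, where $\hat S_\pm$ is the second-moment matrix of the rows $\bx_{\assign_k}\pm\bx_{\assigntrue_k}$, and apply Lemma~\ref{lem:conc} to each.

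The main obstacle is controlling the constants $K^2\|\Sigma\|$ for these extremal rows, which is where the $(\ln n)^6$ comes from. Writing $\bx=\Sigma^{1/2}\bz$ and rotating so that the selection depends on a single Gaussian coordinate $g$, the matched row is $g_{\max}$ in that direction and independent Gaussian in the others. The maximum of $n$ Gaussians is sub-Gaussian around its mean $\mu\sim\sqrt{2\ln n}$ (Lemma~\ref{lem:extrememoments}), so its $\psi_2$ norm is $K=O(\sqrt{\ln n})$. The second-moment matrices have norm $O(s)=O(\ln n)$ by Lemma~\ref{lem:extrememoments}, because the mean $\mu\Sigma_1/\sqrt{\Sigma_{11}}$ contributes rank one. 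Unmatched rows have $K=O(1)$ up to the same mean shift, bounded by $\sqrt{\ln n}$. The deviation bound is therefore
\begin{equation*}
CK^2\|\bar A(1)\|\sqrt{(d+\delta)/m}\lesssim(\ln n)^2\sqrt{d/m}.
\end{equation*}
This already gives $m\gtrsim d(\ln n)^4/\epsilon^2$. The stated $(\ln n)^6$ follows from Lemma~\ref{lem:conc}'s $m\gtrsim dK^4\|\Sigma\|^2/\epsilon^2$ with $K^4\sim(\ln n)^2$ and $\|\Sigma\|^2\sim(\ln n)^2$ if one uses the cruder $K\lesssim\ln n$ bound, or one further factor from the polarized sums. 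In either case the polylog bound is what matters, and I would not try to optimize the exponent.

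Finally, we take $\delta$ fixed, or $\delta=\ln(1/\eta)$ for failure probability $\eta$, and union bound over the six matrices.
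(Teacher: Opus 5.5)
Your proposal follows the paper's proof in outline. It uses the same expansion of $\hat A^{-1}\hat B$ around $\bar A^{-1}\bar B$ and the same conditioning on the match pattern, so matched and unmatched rows form two i.i.d.\ families with means $\bar A(1)$ and $\bar A(0)$. It also applies Lemma~\ref{lem:conc} block by block with sub-Gaussian norm $O(\sqrt{s})$ and second-moment norm $O(s)$, as the paper does.

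The one substantive difference is the perturbation constant. The paper bounds $\|\bar A^{-1}\|^2\|\bar B\|$, giving $c_n=O(s)$ because $\|\bar B\|=O(s)$. The factor $c_n^2=O((\ln n)^2)$, on top of $K^4\|\Sigma\|^2=O((\ln n)^4)$, is what produces the sixth power. It does not come from a cruder bound on $K$ or from polarization, as you guess. You keep $\bar A^{-1}\bar B=fI+(1-f)\rho_n\bar A^{-1}\Sigma_{1,1}^{-1}\Sigma_1\Sigma_1^T$ together as one product. That gives an $O(1)$ constant and hence $m\gtrsim d(\ln n)^4/\epsilon^2$, which is stronger than, and implies, the stated result.

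Two steps are wrong as written, though both are easily repaired.

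\emph{Polarization.} In fact $\tfrac14(\hat S_+-\hat S_-)=\tfrac12(\hat B_0+\hat B_0^T)$, not $\hat B_0$. The cross-moment matrix $\hat B_0$ is not symmetric, and your argument does not control its antisymmetric part. That part has mean zero, since $\bar B(0)$ is symmetric, but it still fluctuates. For example, with $\Sigma=I$ and $\querytrue=\be_1$, its $(2,1)$ entry is a mean-zero average of $\tfrac12[(\bx^k_{\assign_k})_2(\bx^k_{\assigntrue_k})_1-(\bx^k_{\assign_k})_1(\bx^k_{\assigntrue_k})_2]$, with variance of order $(s+w)/m$. The fix is what the paper does. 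Apply Lemma~\ref{lem:conc} within each block to the second-moment matrix $\Gamma$ of the stacked rows $(\bx^k_{\assign_k},\bx^k_{\assigntrue_k})\in\reals^{2d}$, and use $\|\hat B-\bar B\|\le\|\Gamma-\bar\Gamma\|$. The matched block $\hat B_1=\hat A_1$ is symmetric and unaffected.

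\emph{The sign of $\rho_n$.} The claim $\rho_n(f)\ge 0$ is false in general. Lemma~\ref{lem:vw} gives $w=1+v=1-(s-1)/(n-1)\le 1$, not $w\ge 1$. Hence $\rho_n(f)=(s-1)(fn-1)/(n-1)<0$ whenever $f<1/n$, and then $\bar A\succeq\Sigma$ and $\|\bar A^{-1}\|\le\|\Sigma^{-1}\|$ can fail. The repair is short:
\begin{itemize}
\item Since $0\preceq\Sigma_{1,1}^{-1}\Sigma_1\Sigma_1^T\preceq\Sigma$, you still have $\bar A\succeq\min\{1,1+\rho_n\}\Sigma$.
\item Here $1+\rho_n=fs+(1-f)w\ge f$, so $\|\bar A^{-1}\|\le\|\Sigma^{-1}\|/f$.
\item Also $|\rho_n|<1$ (because $w>0$), so $|\rho_n|/(1+\rho_n)\le 1/f$.
\end{itemize}
Your $O(1)$-in-$n$ bounds therefore survive with $f$-dependent constants, and the Weyl step then needs $\epsilon\ll f\lambda_{\min}(\Sigma)$.
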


Since $\Sigma_{1,1}^{-1}\Sigma_1\Sigma_1^T$ in Lemma \ref{lem:gammamean} is a rank 1 perturbation, we can apply the Sherman-Morrison formula to obtain
\begin{equation}
 \bar{A}(f)^{-1} = \Sigma^{-1} - \rho_n(f) \frac{\Sigma_{1,1}^{-1}\Sigma^{-1}\Sigma_1\Sigma_1^T\Sigma^{-1} }{1+ \rho_n(f)\Sigma_{1,1}^{-1}\Sigma_1^T\Sigma^{-1}\Sigma_1 }.
\end{equation}
Since $\Sigma \querytrue = \Sigma_1$, $\querytrue = \Sigma^{-1}\Sigma_1$. Thus with $\querytrue = {\be}_1$, this simplifies to
\begin{equation}
 \bar{A}(f)^{-1} = \Sigma^{-1} - \frac{\rho_n(f)}{ \Sigma_{1,1}(1+\rho_n(f))} \querytrue (\querytrue) ^T.
\end{equation}

In order to understand what the limit vector looks like, let us consider the case $\Sigma = I$, $\querytrue = \be_1$. No generality is lost, since in practice we may estimate $\Sigma$ using all the instances and transform the instance distribution to an isotropic one. Writing $\valtrue/||\valtrue|| = u\, \querytrue + \sqrt{1-u^2}\,{\boldsymbol \delta}$ where $ {\boldsymbol \delta} \cdot \querytrue = 0$, an application of the Sherman-Morrison formula yields
\begin{equation}\label{eq:hatvconc}
\frac{1}{||\valtrue||}\bar{A}(f)^{-1}\bar{B}(f)\valtrue =\querytrue\, u\, \varphi_n(f) + f \sqrt{1-u^2}\, {\boldsymbol \delta}
\end{equation}
where 
\begin{equation}\label{eq:varphidef}
 \varphi_n(f) \coloneqq \frac{f+\rho_n(f)}{1+\rho_n(f)} = \frac{s(fn -1) + (1-f)}{n + fn (s-1) - s}.
\end{equation}
In particular, if $u =1$ (the query and value point in the same direction), the regression problem from a conditionally random assignment yields a scalar multiple of the correct query whenever $\varphi_n(f)>0$, i.e.\ for $f > (s-1)/(sn-1) \sim 1/n$. This is also evident simply from the symmetry of the problem.

\subsection{Implications for one step of the $\widetilde{\rm EM}_1$}\label{sec:onestep}
Since the assignment map $\assignextv^1$ cares only about the direction of its argument, not its magnitude, obtaining \emph{any} positive scalar multiple of $\valtrue$ from the value map is sufficient for $\widetilde{\rm EM}_1$ to recover the correct assignment. When $\query \propto \val$ (i.e.\ $u=1$), Eq.~\eqref{eq:hatvconc} becomes $\bar{A}(f)^{-1}\bar{B}(f)\valtrue = \varphi_n(f)\,\valtrue$. For a completely random assignment $\varphi_n(1/n)=1/n$. Therefore, the question becomes when $m$ is large enough that the estimation noise of Theorem~\ref{thm:conc} does not overwhelm the resulting signal, which is itself only $O(1/n)$ in size, since $\varphi_n(1/n)=1/n$.

 By Theorem~\ref{thm:conc}, $\|\valmap(\assign) - \varphi_n(f)\valtrue\| = O(\epsilon)$ once $m \gtrsim m_c(\epsilon) = d(\ln n)^6/\epsilon^2$. Requiring $\epsilon$ to be a small fraction of the signal $\varphi_n(1/n)=1/n$, say $\epsilon \sim 1/n$, gives
\begin{equation}\label{eq:mbound_onestep}
m \gtrsim d\,n^2 (\ln n)^6.
\end{equation}

It should be noted that after the initial assignment has been made, the updated assignment $\assignv^{(1)}$ is not uniformly sampled, therefore the statements above concern only the first step. Thus, it is a different and more difficult (yet feasible) problem to say something about the actual dynamics of the EM algorithm.

\subsection{The uniform selection mechanism as a special case}
The case $f=1/n$ analyzed in Section~\ref{sec:onestep} is what one obtains when the active instance is selected uniformly, i.e.\ when $\assigntruev \sim {\rm Uniform}([n]^m)$ as well as $\assignv$ -- the same as setting $\lambda=0$ (or $\kappa=0$) before taking the $\tau\to\infty$ limit, so that no extremal principle governs the labeling at all. This regime was previously analyzed by \cite{guptalearning}. A coarser version of this result, given below, is recovered from Theorem~\ref{thm:conc} and Lemma~\ref{lem:gammamean}. 

 \begin{lem}[See Lemma 3.1 in  \cite{guptalearning}]\label{thm:conc_uniform}
Assume $\bx_j^k$ are i.i.d.\ $\mathrm{Normal}(0,I_d)$, and $\assigntruev,\assignv \sim {\rm Uniform}([n]^m)$. Then, for $\epsilon \ll 1$,
\begin{equation}
||\valmap(\assign) - \valtrue/n || = O(\epsilon)
\end{equation}
 with constant probability when $m \gtrsim d/\epsilon^2$. In particular, $m \gtrsim dn^2$ samples are needed to resolve the direction of $\valtrue$ from a random assignment.
\end{lem}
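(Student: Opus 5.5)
The plan is to obtain the lemma as the $\lambda=0$ specialization of Lemma~\ref{lem:gammamean} and Theorem~\ref{thm:conc}, with all extreme-value moments replaced by ordinary Gaussian moments. When $\assigntruev$ is uniform and independent of the instances, the rows $\bx^k_{\assigntrue_k}$ are simply i.i.d.\ $\mathrm{Normal}(0,I_d)$. There is no distinguished query direction, so in the formulas of Lemma~\ref{lem:gammamean} the rank-one correction vanishes. Formally, the role of $(s,w)$ is played by $(1,1)$, giving $\rho_n\equiv 0$.

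Conditioning on the indicators $\chi_k=1\{\assign_k=\assigntrue_k\}$, which are i.i.d.\ Bernoulli$(1/n)$ and independent of the instances, the pair $(\bx^k_{\assign_k},\bx^k_{\assigntrue_k})$ is:
\begin{itemize}
\item a repeated standard Gaussian vector when $\chi_k=1$;
\item two independent standard Gaussian vectors when $\chi_k=0$.
\end{itemize}
Hence
\[
\E[\hat\Sigma(\assignv,\assignv)]=I, \qquad \E[\hat\Sigma(\assignv,\assigntruev)]=\tfrac1n I,
\]
and the target is $\bar A^{-1}\bar B\valtrue=\valtrue/n$, consistent with $\varphi_n(1/n)=1/n$ at $u=1$.

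For the concentration step I would use Eq.~\eqref{eq:regxi} and write
\[
\valmap-\valtrue/n=\hat\Sigma(\assignv,\assignv)^{-1}\bigl[(\hat\Sigma(\assignv,\assigntruev)-\tfrac1n I)\valtrue-\tfrac1n(\hat\Sigma(\assignv,\assignv)-I)\valtrue\bigr].
\]
The matrix $\hat\Sigma(\assignv,\assignv)$ has i.i.d.\ standard Gaussian rows. Lemma~\ref{lem:conc} with $K=1$ and $\delta$ a constant then gives $\|\hat\Sigma(\assignv,\assignv)-I\|=O(\epsilon)$ with constant probability once $m\gtrsim d/\epsilon^2$. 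For $\epsilon\ll1$ this yields $\|\hat\Sigma(\assignv,\assignv)^{-1}\|\le 2$.

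For the cross term, I would stack the two rows into a $2d$-dimensional vector $\bz_k=(\bx^k_{\assign_k},\bx^k_{\assigntrue_k})$. These vectors are i.i.d.\ across $k$, since the $\chi_k$ are i.i.d.\ and independent of the Gaussians. Each $\bz_k$ is sub-Gaussian with $K=O(1)$: it is a Bernoulli mixture of two Gaussian laws, each with covariance of norm at most $2$. Applying Lemma~\ref{lem:conc} to $\frac1m\sum_k\bz_k\bz_k^T$ in dimension $2d$ controls its off-diagonal block $\hat\Sigma(\assignv,\assigntruev)$ around $\frac1n I$, again at scale $\epsilon$ once $m\gtrsim d/\epsilon^2$. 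This is the step I expect to need the most care, because Lemma~\ref{lem:conc} is stated for second-moment matrices rather than cross covariances. The stacking trick is the intended route around that, together with checking that a mixture of Gaussians is sub-Gaussian with a uniform constant.

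Combining the two estimates with $\|\valtrue\|=O(1)$ gives
\[
\|\valmap-\valtrue/n\|\le 2\bigl(O(\epsilon)+O(\epsilon)/n\bigr)=O(\epsilon).
\]
The $(\ln n)^6$ factor in Theorem~\ref{thm:conc} does not appear here, because no extremal rows are present: the sub-Gaussian constant and $\|\Sigma\|$ are $O(1)$ rather than $O(\ln n)$. The final claim follows by taking $\epsilon=c/n$ with $c$ a small constant, so that the error is a small fraction of the signal $\|\valtrue\|/n$. The direction of $\valmap$ is then within a small angle of $\valtrue$, and this requires $m\gtrsim dn^2$.
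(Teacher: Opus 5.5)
Your proposal is correct and follows the route the paper indicates. You specialize Lemma~\ref{lem:gammamean} to the uniform case, so that $\rho_n\equiv 0$, $\bar A=I$ and $\bar B=I/n$, and then rerun the concentration argument of Theorem~\ref{thm:conc}, including the $2d$ stacking trick for the cross-covariance; with $O(1)$ sub-Gaussian constants the $(\ln n)^6$ factor disappears. Your observation that the match indicators are i.i.d.\ Bernoulli$(1/n)$, so the stacked vectors are i.i.d.\ unconditionally and Lemma~\ref{lem:conc} applies directly, is a small but genuine simplification: it removes the need to condition on a fixed match fraction and split into matched and unmatched bags as in the proof of Theorem~\ref{thm:conc}.
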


We now compare to Lemma 3.1 in  \cite{guptalearning}. Their result, in our notation, concerns the large $m$ behavior of the maximizer of $Q_1(\val;p)$ when $p_{k,j} = 1/n$, i.e. $p$ is the uniform distribution on $[n]^m$.  Roughly speaking, they show that $O(dn^2\ln(n)/\epsilon^2)$ bags guarantee a high probability that the error in $\valmap(\assign)n$ is $O(\epsilon)$.  In their notation, $n$ is denoted $q$, $\epsilon$ is $\sqrt{\epsilon}$, $\valtrue$ is ${\bf r}$. Moreover, they compute the unscaled solution to the regression problem, denoted $\val_{\rm min}$ which is then multiplied by $m^{-1}\sum_{k=1}^m(X^k)^TX^k$.  To see how this is related to Lemma \ref{thm:conc_uniform}, observe that when solving the resulting regression problem, the noise floor is controlled by the estimation of the empirical covariance matrix of the assigned instances, which has $m$ rows even though we are using a total of $n \times m$ instances. Moreover, it is still a fraction $1/n$ of the total instances which provide a signal. This heuristically justifies why the scaling is the same (up to logarithmic in $n$ factors).

Compared to the extremal selection model studied in Section~\ref{sec:manybag}, this uniform-selection case is therefore not qualitatively different for a random assignment. Put simply, regardless of the selection mechanism, the signal from a random assignment is $1/n$ if $\query$ and $\val$ are aligned.

\subsection{Angle formula when query and value are unaligned}
As another consequence of Theorem \ref{thm:conc}, we have a formula for $\theta_{\valmap,\valtrue}$ in the general case when $\theta_{\valtrue,\querytrue}>0$: \begin{equation}\label{eq:thetaregb0}
\theta_{\valmap,\valtrue}  \to \bar{\theta}_{\valmap,\valtrue}(f,u)  \coloneqq \cos^{-1}\left( \frac{u^2\varphi_n(f)+ (1-u^2)f}{(u^2\varphi_n(f)^2 + (1-u^2)f^2)^{1/2}}\right),\quad u =\cos(\theta_{\valtrue,\querytrue}).
\end{equation}
Figure~\ref{fig:5} validates this formula numerically.  A noteworthy aspect of the formula is that it is non-monotonic in $f$. This also suggests that even in the unaligned case, where we no longer have a consistent estimator of the direction, the angle is still significantly  reduced from a random vector.

\begin{figure}[h!]
\centering
\includegraphics[width=\textwidth]{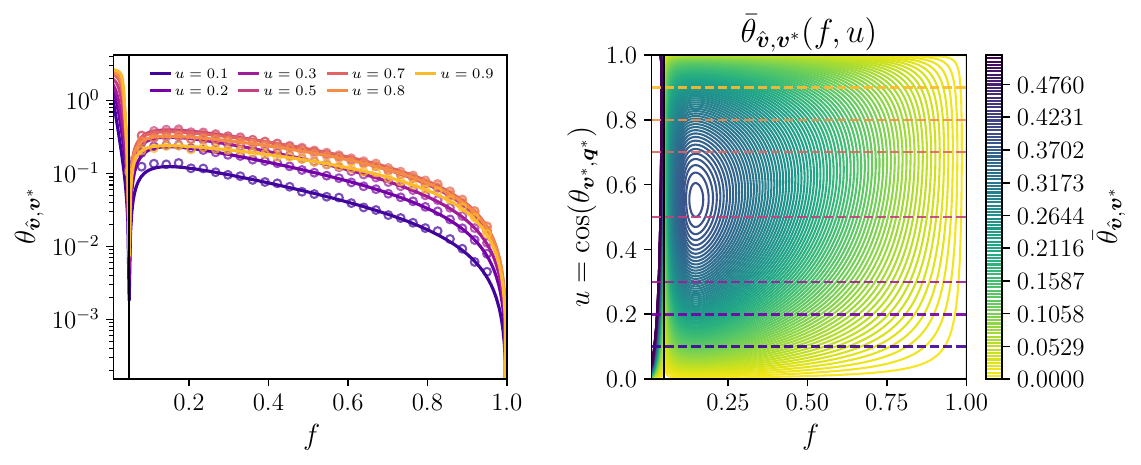}
\caption{Value update (Theorem~\ref{thm:conc}), conditioned on a given match fraction $f$. The angle $\theta_{\hat{\boldsymbol{v}},\boldsymbol{v}^*}$ is shown for several values of the alignment parameter $u=\cos(\theta_{\valtrue,\querytrue})$, (left) as simulation vs.\ theory across $f$ and (right) as a contour over $(f,u)$, with the $u$ values shown on the left indicated by dashed lines. Parameter values: $n=20$, $d=3$, $m=50{,}000$.}
\label{fig:5}
\end{figure}

\begin{figure}[h!]
\centering
\includegraphics[width=\textwidth]{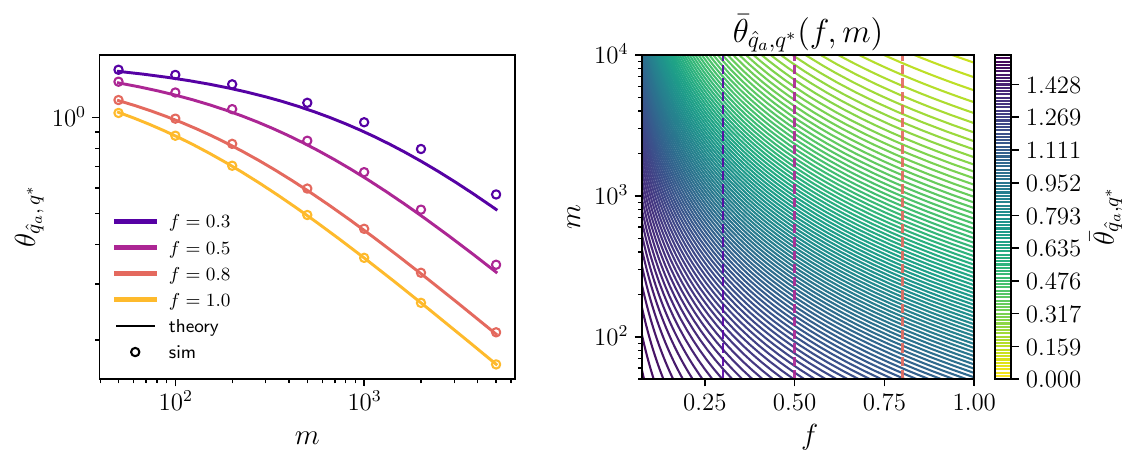}
\caption{Query update (Theorem~\ref{thm:query_asymp}). (left) the angle $\theta_{\hat{q}_a,q^*}$ as a function of $m$ for several match fractions $f$, comparing simulation to the theoretical prediction Eq.~\eqref{eq:theta_qa_theory}. (right) a contour plot of the same prediction over $(f,m)$, with the $f$ values shown on the left indicated by dashed lines. Parameter values: $n=20$, $d=500$.}
\label{fig:6}
\end{figure}

\subsection{Query update}\label{sec:query_asymp}
We end this section by presenting a result for the large $m$ behavior of the query. As with the formula above, the stated result is for isotropic instances ($\Sigma = I$) which makes the formulas simpler. Because the query update is not informative when computed from a uniformly random assignment, this result is not as useful as Theorem \ref{thm:conc}, but it does provide a rationale for using $\hat{\query}_a$ as an approximation of query MLE.

\begin{thm}\label{thm:query_asymp}
Let $\bx_j^k$ be i.i.d.\ $\mathrm{Normal}(0,I_d)$, and let $\assignv$ be sampled uniformly from
\[
\{\assignv\in[n]^m:\ d_H(\assignv,\assigntruev)/m = 1-f\}. 
\]
For any fixed $d$ and $n$, as $m\to\infty$,
\begin{equation}\label{eq:xbar_conc}
\bar{\bx}_{\assignv} \rightarrow \frac{fn-1}{n-1}\,\mu  \querytrue,\quad \text{almost surely}
\end{equation}
When $f>1/n$ the limit is a positive multiple of $\querytrue$, and consequently $\hat{\query}_a(\assignv)\to\querytrue$ almost surely.  
\end{thm}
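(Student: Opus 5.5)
The approach is to split $\bar{\bx}_{\assignv}$ according to whether each bag is correctly matched, and apply the strong law of large numbers to each part separately. Condition on the set $M=\{k:\assign_k=\assigntrue_k\}$, which has size exactly $fm$. For bags with $k\in M$ the assigned instance is the extremal one, $\bx^k_{\assigntrue_k}$. For $k\notin M$, uniformity of $\assignv$ on the constrained set means $\assign_k$ is uniform over $[n]\setminus\{\assigntrue_k\}$, independently across bags. Since $\assigntruev$ depends only on the instances and the sampling of $\assignv$ is independent of the $X^k$ given $\assigntruev$, the vectors $\bx^k_{\assign_k}$ are conditionally independent across $k$ given $M$. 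Their law depends only on whether $k\in M$. We may therefore write
\begin{equation*}
\bar{\bx}_{\assignv} = f\,\frac{1}{fm}\sum_{k\in M}\bx^k_{\assigntrue_k} + (1-f)\,\frac{1}{(1-f)m}\sum_{k\notin M}\bx^k_{\assign_k},
\end{equation*}
where each average runs over i.i.d.\ vectors with finite second moments, since Gaussian order statistics have all moments.

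Next we compute the two means with $\querytrue=\be_1$. In the first coordinate, the matched term has mean $\mu$ by definition. For the unmatched term, exchangeability gives $\sum_j X_j = X_{\assigntrue}+\sum_{j\ne\assigntrue}X_j$ with $\E\sum_j X_j=0$. Hence a uniformly chosen non-maximal index has first-coordinate mean $-\mu/(n-1)$. In coordinates $2,\dots,d$ the selection depends only on coordinate $1$, which is independent of the remaining coordinates. Those coordinates of the selected instance are therefore still standard Gaussian with mean zero. Combining the two contributions,
\begin{equation*}
f\mu - (1-f)\frac{\mu}{n-1} = \frac{f(n-1)-(1-f)}{n-1}\mu = \frac{fn-1}{n-1}\mu .
\end{equation*}

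The remaining concern is the almost sure convergence along a sequence in which $m\to\infty$ and the conditioning set changes with $m$. I would handle this by realizing the randomness on a single probability space. Take i.i.d.\ sequences of matched draws and of unmatched draws, and let the empirical averages over the first $fm$ matched draws and the first $(1-f)m$ unmatched draws define $\bar{\bx}_{\assignv}$. This requires $fm$ to be an integer, or else a subsequence of $m$ along which it is. The joint law agrees with the model for each $m$, and the SLLN applies directly to each average, giving \eqref{eq:xbar_conc}. If one prefers not to rely on a coupling, a fourth-moment Borel--Cantelli bound on each average works equally well.

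The final claim is then immediate. When $f>1/n$ we have $fn-1>0$, and $\mu>0$, so the limit is a nonzero positive multiple of $\querytrue$. Normalization is continuous away from the origin, so $\hat{\query}_a(\assignv)\to\querytrue$ almost surely. The main obstacle is the conditional-independence and coupling step above, which has to be stated carefully.
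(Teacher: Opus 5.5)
Your proposal is correct and follows the paper's proof: you split $\bar{\bx}_{\assignv}$ into matched and unmatched bags, obtain the means $\mu\querytrue$ and $-\mu\querytrue/(n-1)$ from $\E[\sum_j \bx_j^k]=0$ (the orthogonal coordinates being unaffected by the selection), and apply the law of large numbers. Your treatment of conditional independence given the matched set, and of the triangular-array issue via a coupling or a fourth-moment Borel--Cantelli bound (which yields almost sure convergence under any joint realization), makes rigorous a step the paper covers with a bare appeal to the law of large numbers.
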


The proof, given in Appendix \ref{app:proofs}, is based on the following idea. For a vector $\bz$ such that $\bz \cdot \querytrue =0$, $\bz \cdot \bar{\bx}_{\assignv}$ has mean $0$ and variance $O(1/m)$, while $\bar{\bx}_{\assignv} \cdot \querytrue \sim f\mu$ for large $n$ and $f\gg 1/n$. The additional $n$ dependence in Eq. \eqref{eq:xbar_conc} comes from accounting for the $1-f$ non-extremal bags.   

We can use Theorem \ref{thm:query_asymp} to obtain heuristics for the finite bag angle. Since there are $d-1$ orthogonal directions, the finite $m$ angle bias is 
\begin{equation}\label{eq:theta_qa_theory}
\theta_{\query,\querytrue} \to \bar{\theta}_{\hat{\query}_a,\querytrue}(f)
\sim \cos^{-1}\!\left(\frac{f\mu}{\sqrt{(f\mu)^2+(d-1)/m}}\right).
\end{equation}
Figure~\ref{fig:6} compares this prediction to simulation across a range of $m$ values and match fractions $f$, confirming the $1/\sqrt{m}$ convergence rate and the $f$-dependence of the signal.

\section{Conclusion}\label{sec:conclusion}

We studied an extremal multiple-instance regression problem in which the selection rule and the labeling rule are decoupled through separate query and value directions. Starting from a soft latent-variable model, we derived a family of approximate EM-based assignment iterations indexed by $\kappa \in [0,1]$, interpolating between residual-based and extremal assignment rules. The data itself is independent of  $\kappa$ in the limit when the selection and labeling are noiseless, which raises questions about how $\kappa$ should be selected. We have not given an optimal schedule for $\kappa$, thus leaving open the question of what this is even for isotropic Gaussian instances, let alone other instance distributions. 

We presented numerical results which illustrate that the optimal schedule is highly angle-dependent. The alternating schedule performs better for large misalignment between the value and query directions, while the reverse is true for the fixed-but-mixed schedule. This has to do with how the M-step biases the assignment. In the alternating schedule, the bias introduced by one $\kappa$ value can interfere with the computation of the value vector in the next step when the query and value are misaligned.

The main theoretical results describe the large-$m$ behavior of the value and query update maps conditional on a fixed match fraction $f$. Theorem~\ref{thm:conc} shows that the value update $\valmap(\assign)$ concentrates around a deterministic function of $f$ and the alignment $u=\cos(\theta_{\valtrue,\querytrue})$, with error $O(\epsilon)$ once $m\gtrsim m_c(\epsilon)\coloneqq d(\ln n)^6/\epsilon^2$. As a consequence, we find that a single random initial assignment already yields, after one step, an assignment close to the truth once $m\gtrsim dn^2(\ln n)^6$ bags are available (Section~\ref{sec:onestep}).  Taken together, these results establish consistency of the single-step update maps at fixed $f$ in the large-$m$ limit, and show that one-step recovery from a fully random initialization requires only polynomially more bags than recovery from a favorable one. In either case, the results do not by themselves characterize the full finite-time dynamics of the iteration, which is left for future work.

In addition to motivating problems at the interface of high dimensional probability and extremal statistics, it is our hope that the ideas will lead to improved methods for practical problems. In particular, it would be interesting to investigate the implications for real MIL pipelines. As a step towards this, one could consider the situation where the instances are learned embeddings of samples from a sequence space. It would also be interesting to study self-attention. 

\section*{Acknowledgements}

Early work on this project was conducted during the workshop \textit{Extremal Statistics in Biology} at the Erwin Schr\"{o}dinger International Institute for Mathematics and Physics (ESI), Vienna, June 2025. This research was supported in part by grants from the NSF (DMS-2235451) and Simons Foundation (MPS-NITMB-00005320) to the NSF-Simons National Institute for Theory and Mathematics in Biology (NITMB). I also acknowledge helpful conversations with Pankaj Mehta about this project. 

\newpage 
\appendix
\numberwithin{equation}{section}
\makeatletter
\@addtoreset{figure}{section}
\makeatother
\renewcommand{\thefigure}{\thesection.\arabic{figure}}
\renewcommand{\theHfigure}{\thesection.\arabic{figure}}
\setcounter{figure}{0}

\section{Notation}\label{app:notation}

Table~\ref{table:1} summarizes the notation used in this manuscript. 

\begin{table}[h!]
\caption{Notation used in the main text.}
\label{table:1}
\centering
\begin{tabular}{@{}l p{0.64\textwidth} l@{}}
\toprule
\multicolumn{3}{l}{\textit{Data and parameters}} \\
\midrule
$m,\,n,\,d$ & Number of bags; instances per bag; feature dimension & \\
$\theta_{\bx,\by}$ & Angle between vectors $\bx$ and $\by$ &  \\
$\bx_j^k \in \reals^d$ & Instance $j$ in bag $k$, $1\le j\le n$, $1\le k\le m$ &  \\
$y_k \in \reals$ & Bag-level response &  \\
$\query,\,\val$ & query and value vectors &   \\
$\kappa \in [0,1]$ & Interpolation parameter. &  \\
$\assignv \in [n]^m$ & Assignment vector $(\assign_1,\dots,\assign_m)$ &  \\
$d_H(\assignv,\assigntruev)$ & Hamming distance $\#\{k:\assign_k\ne\assigntrue_k\}$ &  \\
$X(\assignv)$ & Design matrix for assignment $\assignv$ &  \\
$\hat{\Sigma}(\assignv,\assignv')$ & Cross covariance matrix $m^{-1}X(\assignv)^TX(\assignv')$  &  \\
$f = 1 - d_H(\assignv,\assigntruev)/m$ & Match fraction & \\[4pt]
\multicolumn{3}{l}{} \\
\multicolumn{2}{l}{\textit{Update maps for ${\rm EM}_{\kappa}$ iteration}} & Domain $\to$ Range \\
\midrule
$\assignext_k^\kappa(\query,\val)$ & Interpolating assignment in bag $k$ (Eq.~\eqref{eq:assignkappa}) & $\reals^d\times\reals^d \to [n]$ \\
$\assignextv^\kappa(\query,\val)$ & Bag-wise version $(\assignext_1^\kappa,\dots,\assignext_m^\kappa)$ & $\reals^d\times\reals^d \to [n]^m$ \\
$\valmap(\assignv)$ & Value regression map (Eq.~\eqref{eq:regxi}) & $[n]^m \to \reals^d$ \\
$\hat{\query}_a(\assignv)$ & average estimate of query & $[n]^m \to S^{d-1}$ \\[4pt]
\multicolumn{3}{l}{} \\
\multicolumn{2}{l}{\textit{Extremal moments and model parameters (\S2, \S4)}} & \\
\midrule
$\mu,\,s$ & Mean, second moment of the extremal instance: $\mu\coloneqq\E[X_{\assigntrue}]$, $s\coloneqq\E[X_{\assigntrue}^2]$ &  \\
$w,\,v$ & Second moment, cross-moment of a non-extremal instance: $w\coloneqq\E[X_j^2\mid j\ne\assigntrue]$, $v\coloneqq\E[X_{\assigntrue}X_j\mid j\ne\assigntrue]$ &  \\
$\rho_n(f)$ & Interpolation coefficient, $\rho_n(f)\coloneqq f(s-1)+(1-f)(w-1)$ (Lemma~\ref{lem:gammamean}) &  \\
$\varphi_n(f)$ & Value-map scaling factor (Eq.~\eqref{eq:varphidef}) &  \\
$\sigma_\epsilon,\,\lambda,\,\tau$ & Labeling-noise s.d.; selection inverse temperature; $\tau\coloneqq\lambda+1/(2\sigma_\epsilon^2)$ &  \\
$\vartheta = (\val,\query)$ & Model parameter pair &  \\
\bottomrule
\end{tabular}
\end{table}

\section{Proofs}\label{app:proofs}

We first prove Lemma \ref{lem:vw}, which gives the identities for the extremal moments.

\begin{proof}
We first show  $\E\left[X_{\assigntrue}X_1\right]=1/n$. Note that, using standard Gaussian tail integrals
\begin{align}
\E[X_{\assigntrue}X_1|\{\max_{j >1}X_j =m\}] &= \E[X_1^2{\bf 1}\{X_1>m\}]  + m\E[X_1{\bf 1}\{X_1 < m\}]\\
&= m \phi(m) + 1-\Phi(m) - m\phi(m) = 1- \Phi(m)
\end{align}
where $\phi$ and $\Phi$ are the pdf and cdf of a standard normal. Averaging over $m$ gives
\begin{equation}
\E[1-\Phi(m)] = \bbP(X_1> \max_{j >1}X_j) = \frac{1}{n}. 
\end{equation}
Hence 
\begin{equation}
\E[X_{\assigntrue}\sum_{j=1}^nX_j]  = 1 = s + (n-1)v,
\end{equation}
where the last equality holds by exchangeability. The formula for $v$ follows.

For $w$, since $\E[X_j^2]=1$,
\begin{equation}
n = \E\Big[\sum_{j=1}^n X_j^2\Big] = s + (n-1)w,
\end{equation}
decomposing the sum over $\{j = \assigntrue\}$ and $\{j \neq \assigntrue\}$ exactly as above. Hence $w = (n-s)/(n-1)$.
\end{proof}

Turning to our results about the covariance matrices, we introduce the notation 
 \begin{align}
 A &\coloneqq \hat{\Sigma}(\assignv,\assignv) \quad B \coloneqq \hat{\Sigma}(\assignv,\assigntruev)\\
 \end{align}
 and keep $\bar A,\bar B$ as defined in Lemma~\ref{lem:gammamean}. 

\begin{proof}[Proof of Lemma \ref{lem:gammamean}]
If $k$ is a matched instance ($\assign_k = \assigntrue_k$), the elements of the covariance matrix satisfy
\begin{align}
\E[X_{\assigntrue_k,j'}^k X_{\assigntrue_k,j}^k]
&= \E\!\bigl[\E[X_{\assigntrue_k,j'}^k X_{\assigntrue_k,j}^k \mid X_{\assigntrue_k,1}^k]\bigr] \\
&= \E\!\bigl[\mathrm{Cov}(X_{\assigntrue_k,j'}^k, X_{\assigntrue_k,j}^k \mid X_{\assigntrue_k,1}^k)
  + \E[X_{\assigntrue_k,j}^k \mid X_{\assigntrue_k,1}^k]\E[X_{\assigntrue_k,j'}^k \mid X_{\assigntrue_k,1}^k]\bigr] \\
&= \Sigma_{j,j'} - \Sigma_{1,1}^{-1}\Sigma_{j,1}\Sigma_{j',1} + s\,\Sigma_{1,1}^{-1}\Sigma_{j,1}\Sigma_{j',1}.
\end{align}
If $k$ is an unmatched instance ($\assign_k \neq \assigntrue_k$), then $X_{\assigntrue_k,j}^k$ and $X_{\assign_k,j}^k$ are correlated through their shared dependence on the first coordinate. In particular, for the cross term,
\begin{align}
\E[X_{\assigntrue_k,j'}^k X_{\assign_k,j}^k]
&= \E\!\bigl[\E[X_{\assigntrue_k,j'}^k \mid X_{\assigntrue_k,1}^k]\,\E[X_{\assign_k,j}^k \mid X_{\assign_k,1}^k]\bigr] \\
&= \Sigma_{1,1}^{-2}\Sigma_{1,j'}\Sigma_{1,j}\,\E[X_{\assigntrue_k,1}^k X_{\assign_k,1}^k]
 = \Sigma_{1,1}^{-1}\Sigma_{1,j'}\Sigma_{1,j}\,v,
\end{align}
and for an unmatched instance with itself,
\begin{align}
\E[X_{\assign_k,j'}^k X_{\assign_k,j}^k]
&= \E\!\bigl[\mathrm{Cov}(X_{\assign_k,j}^k, X_{\assign_k,j'}^k \mid X_{\assign_k,1}^k)
  + \E[X_{\assign_k,j'}^k \mid X_{\assign_k,1}^k]\E[X_{\assign_k,j}^k \mid X_{\assign_k,1}^k]\bigr] \\
&= \Sigma_{j,j'} - \Sigma_{1,1}^{-1}\Sigma_{j,1}\Sigma_{j',1} + w\,\Sigma_{1,1}^{-1}\Sigma_{j,1}\Sigma_{j',1},
\end{align}
Using the identity $\E[\hat{\Sigma}(\assignv,\assignv) \mid f] = f\,\E[\hat{\Sigma}(\assigntruev,\assigntruev)] + (1-f)\,\E[\hat{\Sigma}(\assignv,\assignv) \mid d_H = m]$ and collecting the matched and unmatched contributions,
\begin{align}
\E[\hat{\Sigma}(\assigntruev,\assigntruev)] &= \Sigma + (s-1)\,\Sigma_{1,1}^{-1}\Sigma_1\Sigma_1^T, \\
\E[\hat{\Sigma}(\assignv,\assignv) \mid d_H = m] &= \Sigma +(w-1)\,\Sigma_{1,1}^{-1}\Sigma_1\Sigma_1^T,
\end{align}
we obtain 
\begin{equation}
\bar{A}(f) = \Sigma + \bigl(f(s-1) + (1-f)(w-1)\bigr)\Sigma_{1,1}^{-1}\Sigma_1\Sigma_1^T = \Sigma + \rho_n(f)\,\Sigma_{1,1}^{-1}\Sigma_1\Sigma_1^T,
\end{equation}
with $\rho_n(f) = f(s-1)+(1-f)(w-1)$.  Similarly,
\begin{align}
\E[\hat{\Sigma}(\assignv,\assigntruev) \mid d_H = m] &= v\,\Sigma_{1,1}^{-1}\Sigma_1\Sigma_1^T,
\end{align}
and therefore
\begin{equation}
\bar{B}(f) = f\,\Sigma + \bigl(f(s-1) + (1-f)v\bigr)\Sigma_{1,1}^{-1}\Sigma_1\Sigma_1^T = f\Sigma + \rho_n(f)\,\Sigma_{1,1}^{-1}\Sigma_1\Sigma_1^T,
\end{equation}
where we have used $w = v+1$. 
\end{proof}

The proof of Theorem \ref{thm:conc} will make use of the following fact, which follows from Lemma \ref{lem:extrememoments}

\begin{proof}[Proof of Theorem \ref{thm:conc}]
Write
\begin{align}
A^{-1}B
&= \bar{A}^{-1}\bar{B} + (A^{-1} - \bar{A}^{-1})\bar{B} + A^{-1}(B-\bar{B}) = \bar{A}^{-1}\bar{B} + u,
\end{align}
with
\begin{equation}
u=\bar{A}^{-1}(A-\bar{A})A^{-1}\bar{B}+A^{-1}(B-\bar{B}).
\end{equation}
For $\eta>0$, let
\begin{equation}
E_\eta\coloneqq\{\|A-\bar A\|\le \eta,\ \|B-\bar B\|\le \eta\}.
\end{equation}
Choose $\eta\le (2\|\bar A^{-1}\|)^{-1}$. On $E_\eta$,
\begin{equation}
\|\bar A^{-1}(A-\bar A)\|\le \tfrac12,
\end{equation}
so $A$ is invertible and $\|A^{-1}\|\le 2\|\bar A^{-1}\|$. Hence on $E_\eta$,
\begin{equation}
\|u\|
\le (2\|\bar A^{-1}\|^2\|\bar B\|+ 2\|\bar A^{-1}\|)\eta
\end{equation}
Let
\begin{equation}
c_n \coloneqq 2\|\bar A(f)^{-1}\|^2\|\bar B(f)\|+ 2\|\bar A(f)^{-1}\|
\end{equation}
Note $c_n = O(s)$ in $n$ (note however it also depends on $f$ which is bounded by $1$).

If $\eta\coloneqq \epsilon/(2c_n)$ then
\begin{equation}
\bbP\!\left(\|\valmap(\assign)-\bar A^{-1}\bar B\,\valtrue\|>\epsilon\right)
\le \bbP(E_\eta^c).
\end{equation}
By the union bound,
\begin{equation}
\bbP(E_\eta^c)
\le\bbP(\|A-\bar A\|>\eta)+\bbP(\|B-\bar B\|>\eta).
\end{equation}
We now need to show these two terms vanish as $m \to \infty$. We begin with the $A$ term.

As noted in the main text, we need to address the fact that the rows of $A$ are not independent, and thus standard concentration results do not directly apply. Let
\begin{equation}
\iota_k\coloneqq 1_{\{\assign_k=\assigntrue_k\}}
\end{equation}
 and $U^k\coloneqq\bx_{\assign_k}^k(\bx_{\assign_k}^k)^T$. We order the bags so that the first $\lfloor fm \rfloor$ are the matched ones and write
 \begin{align}
 A&=\frac{1}{m}\sum_k U^k  = f\frac{1}{fm} \sum_{k=1}^{fm}U^k 
 +   (1-f)\frac{1}{(1-f)m } \sum_{k=\lfloor fm \rfloor + 1}^{m}U^k\\
 & \coloneqq fA^{(1)} + (1-f)A^{(0)}
 \end{align}
 (where we have dropped the floor symbols since they are irrelevant in the limit).
 Because distinct bags are independent, the $U^k$ are conditionally independent given $\{\iota_i\}_{i=1}^m$.
Then 
\begin{equation}
\bar{A}(1) = \E[A^{(1)}\mid\iota_k=1],\quad \bar{A}(0)\coloneqq\E[A^{(0)}\mid\iota_k=0].
\end{equation}
Using the triangle inequality 
\begin{align}
\bbP(\|A-\bar A\|>\eta) &\le \bbP(f\|A^{(1)}-\bar{A}(1) \|+ (1-f)\|A^{(0)}-\bar{A}(0)\|  >\eta)  \\
  &\le \bbP(\|A^{(1)}-\bar{A}(1)\|>\eta/(f2)) + \bbP(\|A^{(0)}-\bar{A}(0)\|>\eta/((1-f)2))
\end{align}
The concentration result for sub-Gaussian random matrices (Eq. \eqref{eq:conc2}) applies to both terms. Recall that the sub-Gaussian norm of $X^k_{\assigntrue_k}$ is $O(\sqrt{s})$. Also 
\begin{equation}
||A^{(1)}|| = O(s),\quad ||A^{(0)}|| = O(1)
\end{equation}
 by Lemmas \ref{lem:extrememoments} and \ref{lem:gammamean}.

Since the sub-Gaussian norm of $X_{\assigntrue_k}^k$ is $O(\sqrt{s})$ and $||A(1)|| = O(s)$, Lemma \ref{lem:conc} implies that for $\eta \ll 2fs^3$,  $\|A^{(1)}-\bar{A}(1)\|>\eta/(2f)$ with constant probability 
when
\begin{equation}
mf > \frac{4ds^4f^2}{\eta^2}
= \frac{16ds^4f^2c_n^2}{\epsilon^2}
= O\left(\frac{d(\ln n)^6f^2}{\epsilon^2}\right)
\end{equation}
Similarly, when $\eta \ll 2(1-f)$, $\|A^{(0)}-\bar{A}(0)\|>\eta/(2(1-f))$ with constant probability 
if 
\begin{equation}
m(1-f)  \gtrsim \frac{d4(1-f)^2}{\eta^2} = \frac{d4(1-f)^2c_n^2}{\epsilon^2}  = O\left(\frac{d (\ln n)^2(1-f)}{\epsilon^2}\right).
\end{equation}
Therefore when $\epsilon \ll 2\ln n \min\{f,1-f\}$ , 
\begin{equation}
m \gtrsim  \frac{d (\ln n)^2((\ln n)^4f + 1-f)}{\epsilon^2}  = O\left( \frac{d (\ln n)^6}{\epsilon^2}\right). 
\end{equation}

We now need to show that the same is true for the $B$ term. Since $B$ is a cross-covariance matrix, we embed it into a $2d\times 2d$ covariance matrix
and apply the same argument. Define $z_k\in\reals^{2d}$ by
\begin{equation}
z_k\coloneqq\begin{bmatrix}\bx_{\assign_k}^k\\ \bx_{\assigntrue_k}^k\end{bmatrix},\qquad
L^k\coloneqq z_kz_k^T,\qquad
\Gamma\coloneqq \frac1m\sum_{k=1}^m L^k
=\begin{bmatrix}
A & B\\
B^T & \hat\Sigma(\assigntruev,\assigntruev)
\end{bmatrix}.
\end{equation}
The argument used for $A$ can be applied to $\Gamma$, which now has $(2d)^2$ entries, but the same dominant concentration behavior. The result therefore follows from Eq. \eqref{eq:conc2}. 
Let
\begin{equation}
\bar{\Gamma}(f)\coloneqq\E[\Gamma\mid f].
\end{equation}
Indeed, since $B-\bar B$ is the upper-right block of $\Gamma-\bar\Gamma(f)$, we have
\begin{equation}
\|B-\bar B\|
\le
\left\|\Gamma-\bar\Gamma(f)\right\|.
\end{equation}
Thus it is enough to bound the right hand side. Splitting the bags according to whether they are matched gives
\begin{equation}
\Gamma
= f\Gamma^{(1)}+(1-f)\Gamma^{(0)},
\qquad
\Gamma^{(1)}\coloneqq \frac{1}{fm}\sum_{\iota_k=1} L^k,
\quad
\Gamma^{(0)}\coloneqq \frac{1}{(1-f)m}\sum_{\iota_k=0} L^k,
\end{equation}
where, as above, the floor symbols are irrelevant in the limit. The corresponding expectations are
\begin{equation}
\bar{\Gamma}(1)=\E[\Gamma^{(1)}\mid\iota_k=1],
\quad
\bar{\Gamma}(0)=\E[\Gamma^{(0)}\mid\iota_k=0],
\quad
\bar{\Gamma}(f)=f\bar{\Gamma}(1)+(1-f)\bar{\Gamma}(0).
\end{equation}
Using the triangle inequality,
\begin{equation}
\bbP\!\left(\left\|\Gamma-\bar\Gamma(f)\right\|>\eta\right)
\le
\bbP\!\left(
\left\|
\Gamma^{(1)}-\bar{\Gamma}(1)
\right\|>\eta/(2f)
\right)
+
\bbP\!\left(
\left\|
\Gamma^{(0)}-\bar{\Gamma}(0)
\right\|>\eta/(2(1-f))
\right).
\end{equation}
Conditioned on the matched set, the $z_k$'s are independent in each of the two sums. In the matched case $z_k$ consists of two copies of the extremal instance, while in the unmatched case one component is extremal and the other is non-extremal. Hence the sub-Gaussian norm and covariance norm are bounded by the same dominant extremal scale used above for $A$, up to constants.
Consequently $\bbP(\|B-\bar B\|>\eta)$ too has the same dominant bound as the $A$ term, up to replacing $d$ by $2d$. 
\end{proof}

\begin{proof}[Proof of Theorem \ref{thm:query_asymp}]
We make the decomposition 
 \begin{align}
 \bx_j^k=s_{k,j} \querytrue+\bz_j^k
 \end{align}
  The true assignment $\assigntrue_k=\operatorname{argmax}_j\bx_j^k\cdot\querytrue = \operatorname{argmax}_j s_{k,j}$ depends only on $\{s_{k,j}\}_{j=1}^n$.
  We also have 
  \begin{equation}
  \E[ \bx_j^k|j = \assigntrue] = \E[s_{k,\assigntrue_k}]\querytrue = \mu \querytrue
  \end{equation} and because
  \begin{equation}
  \E[ \bx_j^k] =   \E[ \bx_j^k|j = \assigntrue]\frac{1}{n} +  \E[ \bx_j^k|j \ne \assigntrue] \frac{n-1}{n}
  \end{equation}
  we get 
  \begin{equation}
\E[ \bx_j^k|j \ne \assigntrue]  = - \frac{\querytrue \mu}{n-1}.
  \end{equation}
  Therefore, for fixed $n$, by the law of large numbers yields 
  \begin{equation}
  \bar{\bx}_{\assignv} \xrightarrow{{\rm a.s.}} f \mu \querytrue - \frac{(1-f)\mu}{n-1} \querytrue = \frac{fn-1}{n-1}\,\mu\querytrue,
  \end{equation}
  establishing~\eqref{eq:xbar_conc}. When $f>1/n$, the limit $(fn-1)/(n-1)\mu>0$, so $\|\bar{\bx}_{\assignv}\|\to (fn-1)/(n-1)\mu$ and $\hat{\query}_a = \bar{\bx}_{\assignv}/\|\bar{\bx}_{\assignv}\|\to\querytrue$ almost surely. When $f=1/n$ the limit is zero.
\end{proof}

\bibliographystyle{siam}
\bibliography{./extremal_regression.bib}

\end{document}